\newtheorem{thm}{Theorem}[section]
\newtheorem{prop}[thm]{Proposition}
\newtheorem{cor}[thm]{Corollary}
\newtheorem{ass}[thm]{Assumption}
\theoremstyle{definition}
\theoremstyle{remark}
\newtheorem{rmk}[thm]{Remark}
\numberwithin{equation}{section}
\newcommand{\fa}{\mathfrak{a}}
\renewcommand{\AA}{\mathbb{A}}
\newcommand{\CC}{\mathbb{C}}
\newcommand{\RR}{\mathbb{R}} 
\newcommand{\ZZ}{\mathbb{Z}}
\newcommand{\cA}{\mathcal{A}}
\newcommand{\cK}{\mathcal{K}}
\newcommand{\cS}{\mathcal{S}}
\newcommand{\rO}{\mathrm{O}}
\newcommand{\disc}{\mathrm{disc}}
\newcommand{\half}{\frac {1} {2}}
\newcommand{\til}{\widetilde}
\DeclareMathOperator{\BC}{\mathrm{BC}}
\DeclareMathOperator{\sgn}{\mathrm{sgn}}
\DeclareMathOperator{\Rat}{\mathrm{Rat}}
\DeclareMathOperator{\GL}{\mathrm{GL}}
\DeclareMathOperator{\RGL}{\mathrm{RGL}}
\DeclareMathOperator{\SL}{\mathrm{SL}}
\DeclareMathOperator{\rU}{\mathrm{U}}
\DeclareMathOperator{\Sym}{\mathrm{Sym}}
\DeclareMathOperator{\Sp}{\mathrm{Sp}}
\DeclareMathOperator{\Mp}{\mathrm{Mp}}
\DeclareMathOperator{\SO}{\mathrm{SO}}
\DeclareMathOperator{\tr}{\mathrm{tr}}
\DeclareMathOperator{\Ind}{\mathrm{Ind}}
\DeclareMathOperator{\Gal}{\mathrm{Gal}}
\DeclareMathOperator{\R}{\mathrm{R}} 
\DeclareMathOperator{\RS}{\mathrm{R}} 
\newcommand{\isom}{\cong}
\DeclareMathOperator{\lmod}{\backslash}
\DeclareMathOperator{\LO}{\mathrm{LO}}
\DeclareMathOperator{\FO}{\mathrm{FO}} 
\let\Re\undefined
\DeclareMathOperator{\Re}{\mathrm{Re}}
\newcommand{\Asai}{\mathrm{Asai}}
\newcommand{\form}[2]{\langle{#1},{#2}\rangle}
\newcommand{\IARep}{irreducible automorphic representation}
\newcommand{\ICARep}{irreducible cuspidal automorphic representation}
\newcommand{\IUCARep}{irreducible unitary cuspidal automorphic representation}
\newcommand{\IUCAReps}{irreducible unitary cuspidal automorphic representations}
\let\RGL\undefined
\DeclareMathOperator{\RGL}{\mathrm{R}_{E/F}\GL}
\newcommand{\bG}{\mathbf{G}}
\begin{document}
\author{Chenyan Wu}
\email{chenyan.wu@unimelb.edu.au}
\address{School of Mathematics and Statistics
The University of Melbourne, Victoria 3010, Australia}

\thanks{Corresponding author.}
\keywords{theta correspondence, Eisenstein series, $L$-function, global Arthur packet}
\date{\today}

\title[Theta correspondence and global Arthur parameters]{Theta correspondence and simple factors in global Arthur parameters}

\begin{abstract}
  By using results on poles of $L$-functions and theta correspondence, we give a bound on $b$ for $(\chi,b)$-factors of the global Arthur parameter of a cuspidal automorphic representation $\pi$ of a classical group or a metaplectic group where $\chi$ is a conjugate self-dual automorphic character and $b$ is an integer which is the dimension of an irreducible representation of $\SL_{2}(\CC)$.
We derive a more precise relation when $\pi$ lies in a generic global $A$-packet.
\end{abstract}

\maketitle{}
\section*{Introduction}
\label{sec:introduction}

Let $F$ be a number field and let $\AA$ be its ring of adeles.
Let $\pi$ be an \ICARep{} of a classical group $G$ defined over $F$.
We also treat the case of metaplectic groups in this work.
However to avoid excessive notation, we focus on the case of the symplectic groups $G=\Sp(X)$ in this introduction where $X$ is a non-degenerate symplectic space over $F$.
By Arthur's theory of endoscopy \cite{Arthur-book-MR3135650}, $\pi$ belongs to a global $A$-packet associated to an elliptic global $A$-parameter, which is of the form
\begin{equation*}
  \boxplus_{i=1}^{r} (\tau_i,b_i)
\end{equation*}
where $\tau_i$ is an irreducible  self-dual cuspidal automorphic representation of $\GL_{n_i} (\AA)$ and $b_i$ is a positive integer which represents the unique $b_i$-dimensional irreducible representation of Arthur's $\SL_2 (\CC)$.
See Section~\ref{sec:glob-arth-param}, for more details.

In  \cite{jiang14:_autom_integ_trans_class_group_i}, Jiang proposed the $(\tau,b)$-theory.
See, in particular, Principle~1.2 there.
It is a conjecture that uses period integrals to link together automorphic representations in two global $A$-packets whose global $A$-parameters are ``different'' by a $(\tau,b)$-factor.
We explain in more details.
Let $\Pi_{\phi}$ denote the global $A$-packet with elliptic global $A$-parameter $\phi$.
 Let $\pi$  be an \IARep{} of $G(\AA)$ and  let $\sigma$  be an \IARep{} of $H(\AA)$, where $H$ is a factor of an endoscopic group of $G$.
Assume that $\pi$ (resp. $\sigma$) occurs in the discrete spectrum.
Then it is expected that there exists some kernel function $\cK$ depending on $G$, $H$ and $(\tau,b)$ only such that
if $\pi$ and $\sigma$ satisfy  a Gan--Gross--Prasad type of criterion, namely, that the period integral
\begin{equation}\label{eq:jiang-integral}
  \int_{H(F)\lmod H(\AA)} \int_{G(F)\lmod G(\AA)} \cK(h,g) f_{\sigma}(h) \overline{f_{\pi}(g)} dg dh
\end{equation}
is non-vanishing for some choice of $f_{\sigma}\in\sigma$ and $f_{\pi}\in\pi$, then
$\pi$ is in the global $A$-packet $\Pi_{\phi}$ if and only if $\sigma$ is in the global $A$-packet  $\Pi_{\phi_{2}}$ with $\phi=(\tau,b)\boxplus \phi_{2}$.
Then \cite[Section~5]{jiang14:_autom_integ_trans_class_group_i} proceeds to construct certain kernel functions and then using them, defines endoscopy transfer (by integrating over $H(F)\lmod H(\AA)$ only in~\eqref{eq:jiang-integral}) and endoscopy descent (by integrating over $G(F)\lmod G(\AA)$ only in~\eqref{eq:jiang-integral}).
It is not yet known if these are the kernel functions making the statements of Principle~1.2 in \cite{jiang14:_autom_integ_trans_class_group_i} hold.
As the kernel functions come from Bessel coefficients or Fourier--Jacobi coefficients as in \cite[Section~23]{GGP-MR3202556}, we see the non-vanishing of  this period integral is analogous to condition~(i) in the global Gan--Gross--Prasad Conjecture~\cite[Conjecture~24.1]{GGP-MR3202556}.

Jiang suggested in  \cite[Section~7]{jiang14:_autom_integ_trans_class_group_i} that if $\tau$ is an automorphic character $\chi$, then the kernel function can be taken to be the theta kernel and endoscopy transfer and endoscopy descent are theta lifts.
In this case, the span of 
\begin{align*}
  \int_{G(F)\lmod G(\AA)} \cK(h,g) \overline{f_{\pi}(g)} dg
\end{align*}
as $f_{\pi}$ runs over $\pi$
is the theta lift of $\pi$.
This is an automorphic representation of $H(\AA)$.
Lifting in the other direction is analogous.
Assume that the theta lift of $\pi$ is non-zero.
Write $\phi_{\pi}$ for the global $A$-parameter of $\pi$.
Then \cite[Principle~1.2]{jiang14:_autom_integ_trans_class_group_i} says that $\phi_{\pi}$  has a $(\chi,b)$-factor and that the global $A$-parameter of the theta lift of $\pi$ from $G$ to $H$ should be $\phi_{\pi}$ with the $(\chi,b)$-factor removed.
Here $b$ should be of  appropriate size relative to $G$ and $H$.
 Our work is one step in this direction.

One goal of this article is to expand on the $(\chi,b)$-theory and to present the results of \cite{Moeglin-eis-pole-theta-MR1473165}, \cite{GJS-MR2540878}, \cite{JW-sympletic-MR3805648}, \cite{wu22:_period-metaplectic-JNT-in-press}, \cite{JW-unitary-MR3435720}, \cite{wu22:_poles_eisen-unitary-PJM}   for various cases in a uniform way.
As different reductive dual pairs that occur in theta correspondence have their own peculiarities, the notation and techniques  of these papers are adapted to the treatment of their own specific cases. We attempt to emphasise on the common traits of the results which are buried in lengthy and technical proofs in these papers.

After  collecting the results on poles of $L$-functions, poles of Eisenstein series and theta correspondence, we derive a bound for $b$ when $b$ is maximal among all factors of the global $A$-parameter of $\pi$.
In addition,  we derive an implication on global $A$-packets. Of course, the heavy lifting was done by the papers mentioned above.

\begin{thm}[Corollary~\ref{cor:A-packet-no-cusp}]
  The global $A$-packet attached to the elliptic global $A$-parameter $\phi$ cannot have a cuspidal member if $\phi$ has a $(\chi,b)$-factor with
  \begin{equation*}
    b > \half \dim_F X + 1,  \quad \text{if $G=\Sp(X)$}.
  \end{equation*}
\end{thm}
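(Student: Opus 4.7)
The plan is to argue by contrapositive: I would assume a cuspidal $\pi \in \Pi_\phi$ on $\Sp(X)$ exists and deduce that every $(\chi, b)$-factor of $\phi$ must satisfy $b \leq \half \dim_F X + 1$, thereby contradicting the hypothesis. It suffices to treat the maximum $b^{*} := \max\{b' : (\chi, b') \text{ is a factor of } \phi\}$ for the fixed $\chi$, since the statement for every other such $b$ is then automatic.

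The first step I would take is to convert the existence of this maximal $(\chi, b^{*})$-factor into analytic data. Drawing on the $(\chi,b)$-theory that the paper collects from \cite{Moeglin-eis-pole-theta-MR1473165}, \cite{GJS-MR2540878}, and \cite{JW-sympletic-MR3805648}, I would identify $b^{*}$ with the location of the rightmost pole of the partial twisted $L$-function $L^{S}(s, \pi \otimes \chi)$, which in turn matches the rightmost pole of a degenerate Siegel Eisenstein series on an ambient symplectic group induced from $\chi \otimes |{\cdot}|^{s}$ on the Siegel Levi. By a Rallis-type inner product formula, the same integer $b^{*}$ encodes the first occurrence of $\pi$ in the Witt tower of orthogonal groups whose members have discriminant character $\chi$.

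The decisive step is then to use cuspidality of $\pi$ to bound the pole location. I would appeal to the standard analytic theory of Eisenstein series, together with the fact that $\pi$ has no non-trivial constant terms along proper parabolics, to force the rightmost pole of the Eisenstein series to lie no further right than the point corresponding, under the dictionary above, to $b = \half \dim_F X + 1$. This yields $b^{*} \leq \half \dim_F X + 1$, which contradicts the hypothesis and establishes the corollary.

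The main obstacle, which is exactly what the body of the paper must address uniformly across the symplectic, metaplectic, and unitary cases, is the calibration of the three parametrizations -- of $(\chi,b)$-factors, of poles of Eisenstein series, and of first occurrences in the Witt tower -- so that the elementary bound on the Eisenstein pole coming from cuspidality translates cleanly through the dictionary into the stated bound on $b$. Once this calibration is done and the dictionary fully verified in each case, the contrapositive argument presented above applies immediately.
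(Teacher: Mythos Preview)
Your contrapositive skeleton is right, and it matches the paper: one assumes a cuspidal $\pi$ with parameter $\phi$, passes from the $(\chi,b)$-factor to a pole of $L^{S}(s,\pi\times\chi^{\vee})$ at $s=\tfrac{b+1}{2}$, then to a pole of an Eisenstein series, and finally invokes a bound on that pole. However, two of your identifications are off in ways that matter.

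First, the Eisenstein series in play is \emph{not} a degenerate Siegel Eisenstein series. The paper uses the series $E^{Q_{1}}(g,f_{s})$ on $\Sp(X_{1})$ (one hyperbolic plane adjoined to $X$), induced from the cuspidal datum $\chi|\cdot|^{s}\boxtimes\pi$ on the Levi $\GL_{1}\times\Sp(X)$ of the maximal parabolic $Q_{1}$; the introduction explicitly stresses that this series ``is not of Siegel type''. A degenerate Siegel Eisenstein series carries no information about $\pi$, so its pole locations cannot by themselves encode the rightmost pole of $L^{S}(s,\pi\times\chi^{\vee})$ in the way you describe.

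Second, the crucial bound on the location of the maximal pole of $E^{Q_{1}}(g,f_{s})$ does not follow from ``the standard analytic theory of Eisenstein series together with the fact that $\pi$ has no non-trivial constant terms''. Langlands' theory only tells you there are finitely many poles in $\Re(s)>0$; it does not place them. The paper's Theorem~\ref{thm:max-pole-eis} supplies the bound $\tfrac{b+1}{2}\le\tfrac{1}{2}(\dim X+2-r_{X})$, and the sharp part ($r_{X}\le\ldots$) is obtained via the global theta correspondence (stable range for the lowest occurrence), as Remark~\ref{rmk:eis-pole-rx-le} makes explicit. So the input you need at this step is theta-theoretic, not a formal consequence of cuspidality.

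A minor point: identifying $b^{*}$ with the rightmost pole of $L^{S}(s,\pi\times\chi^{\vee})$ via Proposition~\ref{prop:tau-b-L-pole}(1) requires $b^{*}$ to be maximal among \emph{all} factors $(\tau_i,b_i)$ of $\phi$, not just among the $(\chi,\cdot)$-factors. The paper handles the complementary case separately by the trivial inequality $b<\tfrac{1}{2}d_{\check{\bG}}$; you should do the same.
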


Another way of phrasing this is that we have a bound on the size of $b$ that can occur in a factor of type $(\chi,*)$ in the global $A$-parameter of a cuspidal automorphic representation.
Thus our results have application in getting a Ramanujan bound, which measures the departure of the local components $\pi_{v}$ from being tempered for all places $v$ of $F$, for classical groups and metaplectic groups.
This should follow by generalising the arguments in \cite[Section~5]{Jiang-Liu-MR3969876} which treats the symplectic case.
There they first established a bound for $b$ under some conditions on wave front sets.
This enables them to control the contribution of $\GL_{1}$-factors in the global $A$-parameter to the Ramanujan bound.
Our result can supply this ingredient for  classical groups and also metaplectic groups unconditionally.
Then \cite[Section~5]{Jiang-Liu-MR3969876} found a Ramanujan bound for $\pi$ by using the crucial results on the Ramanujan bound for $\GL_{2}$ in \cite{Kim-MR1937203} and \cite{Blomer-Brumley-MR2811610}.

We describe the idea of the proof of our result.
First we relate the existence of a $(\tau,b)$-factor in the elliptic global $A$-parameter of $\pi$ to the existence of poles of partial $L$-functions $L^{S}(s,\pi\times\tau^{\vee})$. See Proposition~\ref{prop:tau-b-L-pole}.
If the global $A$-parameter of $\pi$ has a factor $(\tau,b)$ where  $b$  is maximal among all factors, we can show that the partial $L$-function $L^{S}(s,\pi\times \tau^{\vee})$ has a pole at $s=\half(b+1)$. Thus studying the location of poles of $L^{S}(s,\pi\times \tau^{\vee})$ for $\tau$ running through all  self-dual cuspidal representations of $\GL_{n}(\AA_{F})$ can shed light on the size of $b_{i}$'s that occur in the global $A$-parameter of $\pi$.
Then we specialise to the case where $\tau$ is a character $\chi$ and consider  $L^{S}(s,\pi\times \chi^{\vee})$ in what follows. 

Next we relate the poles of $L^{S}(s,\pi\times\chi^{\vee})$ to the poles of Eisenstein series attached to the cuspidal datum $\chi\boxtimes\pi$. See Section~\ref{sec:eisenst-seri}.
In fact, in some cases, we use the non-vanishing of $L^{S}(s,\pi\times\chi^{\vee})$ at $s=\half$ instead. See Proposition~\ref{prop:L-pole-Eis-pole}. Then we recall in Theorem~\ref{thm:max-pole-eis} that the maximal positive pole of the Eisenstein series has a bound which is supplied by the study of global theta lifts. This is enough for showing Corollary~\ref{cor:A-packet-no-cusp}, though we have a more precise result that the maximal positive pole corresponds to the invariant called the lowest occurrence index of $\pi$ with respect to $\chi$ in Theorem~\ref{thm:LO-equiv-eis-max-pole}.
The lowest occurrence index is the minimum of the first occurrence indices over some Witt towers.
For the precise definition see~\eqref{eq:defn-LO}.
We also have a less precise result (Theorem~\ref{thm:FO-eis-pole}) relating  the first occurrence index of $\pi$ with respect to certain quadratic spaces to possibly non-maximal and possibly negative poles of the Eisenstein series.

More precise results can be derived if we assume that $\pi$ has a generic global $A$-parameter. 
This is because we have a more precise result relating poles or non-vanishing of values of the complete $L$-functions to poles of the Eisenstein series supplied by \cite{JLZ-MR3079762}. Thus we get

\begin{thm}[Theorem~\ref{thm:chi-b-generic}]
  Let $\pi$ be a cuspidal member in a generic global $A$-packet    of $G(\AA)=\Sp(X)(\AA)$. Let $\chi$ be a  self-dual automorphic character of $\GL_{1}(\AA)$. Then 
  the following are equivalent.
  \begin{enumerate}
  \item The global $A$-parameter $\phi_{\pi}$ of $\pi$ has a $(\chi,1)$-factor.
  \item The complete $L$-function $L(s,\pi\times\chi^{\vee})$ has a pole at $s=1$.
  \item The Eisenstein series $E(g,f_{s})$ has a pole at $s=1$ for some choice of section $f_{s}\in \cA^{Q_{1}}(s,\chi\boxtimes\pi)$.
  \item The lowest occurrence index $\LO_{X}^{\chi}(\pi)$ is $\dim X$.
  \end{enumerate}
\end{thm}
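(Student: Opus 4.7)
The plan is to establish three pairwise equivalences $(1)\Leftrightarrow(2)$, $(2)\Leftrightarrow(3)$, and $(3)\Leftrightarrow(4)$, each of which refines one of the general tools already collected in earlier sections to a sharp two-sided statement by invoking the genericity of the global $A$-packet.

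For $(1)\Leftrightarrow(2)$ I would specialise Proposition~\ref{prop:tau-b-L-pole} to $\tau=\chi$ and $b=1$, which detects a $(\chi,1)$-factor in $\phi_{\pi}$ by a pole of the partial $L$-function $L^{S}(s,\pi\times\chi^{\vee})$ at $s=\half(b+1)=1$. To promote the partial $L$-function to the complete one, I would use the generic-packet hypothesis: since $\pi$ lies in a generic global $A$-packet, its local components at the ramified places are generic representations, the local $L$-factors are those predicted by the local Langlands parameters, and in particular they are holomorphic at $s=1$. Hence $L(s,\pi\times\chi^{\vee})$ and $L^{S}(s,\pi\times\chi^{\vee})$ have the same pole at $s=1$.

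For $(2)\Leftrightarrow(3)$ the key input is \cite{JLZ-MR3079762}, referenced just before the theorem statement, which in the generic case gives a two-sided correspondence between poles of $L(s,\pi\times\chi^{\vee})$ at $s=1$ and the existence of a section $f_{s}\in\cA^{Q_{1}}(s,\chi\boxtimes\pi)$ whose Eisenstein series has a pole at $s=1$. This is where the generic hypothesis enters essentially: Proposition~\ref{prop:L-pole-Eis-pole} alone only supplies a one-sided implication or information at $s=\half$, whereas genericity lets one read both directions of the implication from the poles of the complete $L$-function. For $(3)\Leftrightarrow(4)$ I would apply Theorem~\ref{thm:LO-equiv-eis-max-pole}, which identifies the maximal positive pole of $E(g,f_{s})$ with $\LO_{X}^{\chi}(\pi)$ under the normalisation fixed in Section~\ref{sec:eisenst-seri}, together with the upper bound of Theorem~\ref{thm:max-pole-eis}. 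Under this dictionary, the location $s=1$ is the largest admissible pole, and it occurs precisely when $\LO_{X}^{\chi}(\pi)$ attains its extremal value $\dim X$.

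The main obstacle is the $(2)\Rightarrow(3)$ half of the middle equivalence: in general one only has the one-sided or $s=\half$ statements of Proposition~\ref{prop:L-pole-Eis-pole}, and obtaining the converse implication from an $L$-function pole to an Eisenstein series pole with a choice of section really does require the sharper machinery of \cite{JLZ-MR3079762} that is only available under the generic assumption. A secondary but non-trivial check is that, under the normalisation of the induction parameter fixed for the $Q_{1}$-Eisenstein series, the pole at $s=1$ matches $\LO_{X}^{\chi}(\pi)=\dim X$ rather than some shifted value; this is bookkeeping but must be done consistently with the conventions already fixed earlier in the paper.
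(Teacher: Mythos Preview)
Your overall architecture is exactly the paper's: $(1)\Leftrightarrow(2)$ via Proposition~\ref{prop:tau-b-L-pole}, $(2)\Leftrightarrow(3)$ via Theorem~\ref{thm:generic-param-L-pole-Eis-pole} (the rephrasing of \cite[Proposition~4.1]{JLZ-MR3079762}), and $(3)\Leftrightarrow(4)$ via Theorem~\ref{thm:LO-equiv-eis-max-pole}. Two points in your write-up are misstated, though neither is fatal.

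First, you have the hard direction of $(2)\Leftrightarrow(3)$ backwards. Proposition~\ref{prop:L-pole-Eis-pole}(1) already gives $(2)\Rightarrow(3)$: once you know the partial $L$-function has its rightmost positive pole at $s=1$ (automatic here since all $b_i=1$ in the generic parameter), the Eisenstein series acquires a pole at $s=1$ for some section. The direction that genuinely requires \cite{JLZ-MR3079762} is $(3)\Rightarrow(2)$: from a pole of $E^{Q_1}(g,f_s)$ at $s=1$ one cannot in general deduce a pole of the $L$-function, and it is the holomorphy and non-vanishing of the normalised intertwining operator for $\Re(s)\ge\half$ (Assumption~\ref{ass:normalised-intertwining-op}, proved in this case) that forces the normalising $L$-factor to carry the pole.

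Second, your invocation of Theorem~\ref{thm:max-pole-eis} in the $(3)\Leftrightarrow(4)$ step does not do what you want. That theorem only bounds the maximal pole by $\half(\dim X+2-r_X)$, which is typically much larger than $1$; it does not single out $s=1$ as ``the largest admissible pole''. The reason $s=1$ is automatically maximal when it occurs is again genericity: the \cite{JLZ-MR3079762} analysis shows $E^{Q_1}(g,f_s)$ has no poles in $\Re(s)>1$ when $\pi$ lies in a generic packet (this is built into the statement of Theorem~\ref{thm:generic-param-L-pole-Eis-pole} and noted in the remark following Theorem~\ref{thm:chi-b-generic}). With maximality in hand, Theorem~\ref{thm:LO-equiv-eis-max-pole} applied at $s_0=1$ gives $\LO_X^\chi(\pi)=\dim X+2-2=\dim X$, and conversely.
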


Here $Q_{1}$ is a parabolic subgroup of $\Sp(X_{1})$ with Levi subgroup  isomorphic to $\GL_{1}\times \Sp(X)$, where $X_{1}$ is the symplectic space formed from $X$ by adjoining a hyperbolic plane.
Roughly speaking, $\cA^{Q_{1}}(s,\chi\boxtimes\pi)$ is a space of automorphic forms on $\Sp(X_{1})$ induced from  $\chi|\ |^{s}\boxtimes\pi$ viewed as a representation of the parabolic subgroup $Q_{1}$.
We refer the reader to Section~\ref{sec:eisenst-seri} for the precise definition of $\cA^{Q_{1}}(s,\chi\boxtimes\pi)$.
We note that the lowest occurrence index $\LO_{X}^{\chi}(\pi)$ is an invariant in the theory of theta correspondence related to the invariant called the first occurrence index. See Section~\ref{sec:theta-correspondence} for their definitions.
We also include a result (Theorem~\ref{thm:generic-param-L-pole-Eis-pole}) that concerns the non-vanishing of $L(s,\pi\times\chi^{\vee})$ at $s=\half$ and the lowest occurrence index. We plan to improve this result in the future by studying a relation between non-vanishing of Bessel or Fourier--Jacobi periods and the lowest occurrence index. 

We note that the $L$-function $L(s,\pi\times\chi^{\vee})$ has been well-studied and is intricately entwined with the study of theta correspondence, most prominently in the Rallis inner product formula which says that the inner product of two theta lifts is equal to the residue or value of $L(s,\pi\times\chi^{\vee})$ at an appropriate point up to some ramified factors and some abelian $L$-functions. We refer the reader to \cite{Yamana-Lfun-MR3211043} which is a culmination of many previous results. See also the references in \cite{Yamana-Lfun-MR3211043}.
In our approach, the Eisenstein series $E(g,f_{s})$, which is not of Siegel type, is the key link between $L(s,\pi\times\chi^{\vee})$ and the theta lifts.

Now we describe the structure of this article.
In Section~\ref{sec:notation}, we set up some basic notation.
In Section~\ref{sec:glob-arth-param}, we define elliptic global $A$-parameters for classical groups and metaplectic groups and also the global $A$-packet associated to an elliptic global $A$-parameter.
We show how poles of partial $L$-functions detect $(\tau,b)$-factors in an elliptic global $A$-parameter.
In Section~\ref{sec:eisenst-seri}, we define Eisenstein series attached to the cuspidal datum $\chi\boxtimes\pi$ and recall some results on the possible locations of their maximal positive poles.
In Section~\ref{sec:theta-correspondence}, we introduce two invariants of theta correspondence.
They are the first occurrence index $\FO_{X}^{Y,\chi}(\pi)$ and the lowest occurrence index $\LO_{X}^{\chi}(\pi)$ of $\pi$ with respect to some data.
We relate them to poles of Eisenstein series.
Results in Sections~\ref{sec:eisenst-seri} and~\ref{sec:theta-correspondence} are not new.
Our aim is to present the results in a uniform way for easier access.
In Section~\ref{sec:appl-glob-arth}, we show a bound for $b$ in $(\chi,b)$-factors of the global $A$-parameter of $\pi$.
Finally in Section~\ref{sec:gener-repr}, we consider the case when $\pi$ has a generic global $A$-parameter.
We show that when $L(s,\pi\times\chi^{\vee})$ has a pole at $s=1$ (resp.
$L(s,\pi\times\chi^{\vee})$ is non-vanishing at $s=\half$),  the lowest occurrence index is determined.

\section*{Acknowledgement}
\label{sec:acknowledgement}
The author would like to thank Professor Dihua Jiang for suggesting the topic to her.
The author would also like to thank the referee for reading the manuscript carefully  and for informing her of some recent developments.

\section{Notation}
\label{sec:notation}

Let $F$ be a number field and let $E$ be either $F$ or a quadratic field extension of $F$. Let $\varrho\in\Gal(E/F)$ be the trivial Galois element when $E=F$ and the non-trivial Galois element when $E\neq F$. When $E\neq F$, write $\varepsilon_{E/F}$ for the quadratic character associated to $E/F$ via Class field theory. Let $G$ be an algebraic group over $E$. We write $\R_{E/F}G$ for the restriction of scalars of Weil. This is an algebraic group over $F$.

Let $\epsilon$ be either $1$ or $-1$.  By an $\epsilon$-skew Hermitian space, we mean an $E$-vector space $X$ together with an $F$-bilinear pairing
\begin{align*}
  \form{\ }{\ }_X : X \times X \rightarrow E
\end{align*}
such that
\begin{equation*}
  \form{y}{x}_X = -\epsilon\form{x}{y}_{X}^\varrho, \quad \form{ax}{by} = a\form{x}{y}_{X}b^\varrho 
\end{equation*}
for all $a,b\in E$ and $x,y\in X$. We consider the linear transformations of $X$ to act from the right.
We  follow \cite{Yamana-Lfun-MR3211043}'s notation closely and we intend to generalise the results here to the quaternionic unitary group case in our future work.

Let $X$ be an $\epsilon$-skew Hermitian space of finite dimension. Then the isometry group of $X$ is one of the following:
\begin{enumerate}
\item the symplectic group $\Sp(X)$ when $E=F$ and $\epsilon=1$;
\item the orthogonal group $\rO(X)$ when $E=F$ and $\epsilon=-1$;
\item the unitary group $U(X)$ when $E\neq F$ and $\epsilon=\pm 1$.
\end{enumerate}

We will also consider the metaplectic group.
Let $v$ be a place of $F$ and let $F_{v}$ denote the completion of $F$ at $v$. Let $\AA_{F}$ (resp. $\AA_{E}$) denote the ring of adeles of $F$ (resp. $E$). Set $\AA:=\AA_{F}$.
Write $\Mp(X)(F_v)$ (resp. $\Mp(X)(\AA_F)$) for the metaplectic double cover of $\Sp(X)(F_v)$ (resp. $\Sp(X)(\AA_F)$) defined by Weil \cite{Weil-MR0165033}. We note that the functor $\Mp(X)$  is not representable by an algebraic group. We will also need the $\CC^{1}$-extension $\Mp(X)(F_{v})\times_{\mu_{2}} \CC^{1}$ of $\Sp(X)(F_v)$  and we denote it by $\Mp^{\CC^{1}}(F_{v})$. Similarly we define $\Mp^{\CC^{1}}(\AA_{F})$.

Let $\psi$ be a non-trivial automorphic additive character of $\AA_F$ which will figure in the Weil representations as well as the global $A$-parameters for $\Mp(X)$. 

For an automorphic representation or admissible representation $\pi$, we write $\pi^{\vee}$ for its contragredient.

\section{Global Arthur Parameters}
\label{sec:glob-arth-param}

First we recall the definition of elliptic global Arthur parameters ($A$-parameters) for classical groups as well as metaplectic groups.
See \cite{Arthur-book-MR3135650} for the symplectic and the special orthogonal case and we adopt the formulation in \cite{Atobe-Gan-MR3708200} for  the case of the (disconnected) orthogonal groups.
For the unitary case, see \cite{Mok-MR3338302,kaletha:_endos_class_repres}.
For the metaplectic case, see  \cite{Gan-Ichino-Mp-MR3866889}.
Then we  focus on  simple factors of global Arthur parameters and relate their presence to poles of partial $L$-functions.
This is a crude first step for detecting $(\tau,b)$-factors in an elliptic global $A$-parameter according to the `$(\tau,b)$-theory'  proposed in \cite{jiang14:_autom_integ_trans_class_group_i}.

Let $\bG$ be  $\rU(X)$, $\rO(X)$, $\Sp(X)$ or $\Mp(X)$.
Let $d$ denote the dimension of $X$.
Set $\bG^{\circ}=\SO(X)$  when $\bG=\rO(X)$.
Set $\bG^{\circ}=\bG$ otherwise.
Write $\check{\bG}$ for the (complex) dual group of $\bG^{\circ}$.
Then
\begin{equation*}
  \check{\bG}=
  \begin{cases}
    \GL_{d}(\CC), \quad&\text{if $\bG=\rU(X)$};\\
    \Sp_{d-1}(\CC), \quad&\text{if $\bG=\rO(X)$ and $d$ is odd};\\
    \SO_{d}(\CC), \quad&\text{if $\bG=\rO(X)$ and $d$ is even};\\
    \SO_{d+1}(\CC), \quad&\text{if $\bG=\Sp(X)$};\\
    \Sp_{d}(\CC), \quad&\text{if $\bG=\Mp(X)$}.
  \end{cases}
\end{equation*}
An elliptic global $A$-parameter for $\bG$ is a finite formal sum of the form
\begin{equation*}
  \phi = \boxplus_{i=1}^{r} (\tau_i,b_i), \quad\text{for some  positive integer  $r$}
\end{equation*}
where
\begin{enumerate}
\item  $\tau_i$ is an irreducible conjugate self-dual cuspidal automorphic representation of $\GL_{n_i} (\AA_E)$;
\item $b_i$ is a positive integer which represents the unique $b_i$-dimensional irreducible representation of Arthur's $\SL_2 (\CC)$
\end{enumerate}
such that 
\begin{itemize}
\item $\sum_{i} n_ib_i = d_{\check{\bG}}$;
\item  $\tau_i$ is conjugate self-dual of parity $(-1)^{N_{\check{\bG}}+b_i}$ (see Remark~\ref{rmk:conj-self-dual-and-L-fun-characterisation});
\item the factors $(\tau_i,b_i)$ are pairwise distinct.
\end{itemize}
Here $d_{\check{\bG}}$ is the degree of the standard representation of $\check{\bG}$ which, explicitly, is
\begin{equation*}
  d_{\check{\bG}}=
  \begin{cases}
    \dim X, & \text{ if $\bG=\rU(X)$};\\
    \dim X -1, & \text{ if $\bG=\rO(X)$ with $\dim X$ odd};\\
    \dim X , & \text{ if $\bG=\rO(X)$ with $\dim X$ even};\\
    \dim X +1, & \text{ if $\bG=\Sp(X)$};\\
    \dim X, & \text{ if $\bG=\Mp(X)$}
  \end{cases}
\end{equation*}
and 
\begin{equation*}
  N_{\check{\bG}} =
  \begin{cases}
    \dim X \mod 2, & \text{ if $\bG=\rU(X)$};\\
    0, & \text{ if $\bG=\rO(X)$ with $\dim X$ odd};\\
    1 , & \text{ if $\bG=\rO(X)$ with $\dim X$ even};\\
    1, & \text{ if $\bG=\Sp(X)$};\\
    0, & \text{ if $\bG=\Mp(X)$}.
  \end{cases}
\end{equation*}
\begin{rmk}
  We adopt the notation in \cite{jiang14:_autom_integ_trans_class_group_i} and hence we write  $(\tau_i,b_i)$ rather than $\tau_i\boxtimes \nu_{b_i}$ as is more customary in the literature, so that the quantity $b_i$, that we  study, is more visible.
\end{rmk}
\begin{rmk}
  In the unitary case, we  basically spell out what $\Psi_2(\rU(N),\xi_{\mathbf{1}})$ in \cite[Definition~2.4.7]{Mok-MR3338302} is.
  We have discarded the second factor $\tilde{\psi}$ as it is determined by $\psi^N$ and $\xi_{\mathbf{1}}$ in Mok's notation.
\end{rmk}

\begin{rmk}\label{rmk:conj-self-dual-and-L-fun-characterisation}
  \begin{enumerate}
  \item For $\bG=\rU(X)$, we say that $\tau$ is conjugate self-dual of parity $\eta$ if the Asai $L$-function $L(s,\tau,\Asai^\eta)$ has a pole at $s=1$. If $\eta=+1$, we also say that $\tau$ is conjugate orthogonal and if $\eta=-1$, we also say that $\tau$ is conjugate symplectic. The Asai representations come from the decomposition of the twisted tensor product representation of the $L$-group.  See \cite[(2.2.9) and (2.5.9)]{Mok-MR3338302} and \cite{Goldberg-MR1266747}.
  \item For other cases, we mean self-dual when we write conjugate self-dual. We say that $\tau$ is self-dual of parity $+1$ or orthogonal, if $L(s,\tau,\Sym^{2})$ has a pole at $s=1$; we say that $\tau$ is self-dual of parity $-1$ or symplectic, if $L(s,\tau,\wedge^{2})$ has a pole at $s=1$.
  \item The parity is uniquely determined for each irreducible conjugate self-dual cuspidal representation $\tau$.
  \end{enumerate}
\end{rmk}

Let $\Psi_{2}(\bG)$ denote the set of elliptic global $A$-parameters of $\bG$.
Let $\phi\in\Psi_{2}(\bG)$.
 Via the local Langlands conjecture (which is proved for the general linear groups), at every place $v$ of $F$, we  localise $\phi$ to get an elliptic local $A$-parameter,
\begin{equation*}
  \phi_{v}:L_{F_{v}} \times \SL_{2}(\CC) \rightarrow \check{\bG}\rtimes W_{F_{v}},
\end{equation*}
where  $W_{F_{v}}$ is the Weil group of $F_{v}$ and $L_{F_{v}}$ is $W_{F_{v}}$ if $v$ is archimedean and the Weil--Deligne group $W_{F_{v}}\times \SL_{2}(\CC)$ if $v$ is non-archimedean. To $\phi_{v}$ we associate the local $L$-parameter $\varphi_{\phi_{v}}:L_{F_{v}}\rightarrow \check{\bG}\rtimes W_{F_{v}}$ given by
\begin{equation*}
  \varphi_{\phi_{v}}(w)= \phi_{v}(w,
  \begin{pmatrix}
    |w|^{\half} & \\
                & |w|^{-\half}
  \end{pmatrix}
  ) .
\end{equation*}

Let $L^{2}_{\disc}(\bG)$ denote the discrete part of $L^{2}(\bG(F)\lmod \bG(\AA_{F}))$ when $\bG\neq \Mp(X)$ and the genuine discrete part of $L^{2}(\Sp(F)\lmod \Mp(\AA_{F}))$ for $\bG=\Mp(X)$.
Define the full near equivalence class $L^{2}_{\phi,\psi}(\bG)$ attached to  the elliptic global $A$-parameter $\phi$ to be the Hilbert direct sum of all irreducible automorphic representations $\sigma$ occurring in $L^2_\disc(\bG)$ such that for almost all $v$, the local $L$-parameter of $\sigma_v$ is $\varphi_{\phi_v}$.
We remark that in the $\Mp(X)$-case, the parametrisation of $\sigma_v$ is relative  to $\psi_{v}$ since the local $L$-parameter of $\sigma_v$ is attached via the Shimura--Waldspurger correspondence which depends on $\psi_{v}$.
This is the only case in this article where $L^{2}_{\phi,\psi}(\bG)$  depends on $\psi$.

Let $\cA_2(\bG)$ denote the dense subspace  consisting of automorphic forms in $L^2_\disc (\bG)$.
Similarly define $\cA_{2,\phi,\psi}(\bG)$ to be the dense subspace  of $L^2_{\phi,\psi} (\bG)$ consisting of automorphic forms.
Then we have a crude form of Arthur's multiplicity formula which decomposes the $L^2$-discrete spectrum into near equivalence classes indexed by $\Psi_{2}(\bG)$. 

\begin{thm}\label{conj:arthur-mult-formula-crude}
  We have the orthogonal decompositions
  \begin{align*}
    L^2_\disc (\bG)= \hat{\oplus}_{\phi\in \Psi_2(\bG)}L^2_{\phi,\psi}(\bG) \quad \text{and} \quad
    \cA_2(\bG)= \oplus_{\phi\in \Psi_2(\bG)} \cA_{2,\phi,\psi}(\bG).
  \end{align*}
\end{thm}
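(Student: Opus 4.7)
The plan is to derive this crude multiplicity statement by aggregating the full (endoscopic) multiplicity formulas established case-by-case in the cited literature, then showing that two distinct elliptic global $A$-parameters always force orthogonality. Since the statement is weaker than the full multiplicity formula (it ignores the packet decomposition inside each near equivalence class and the characters on global component groups), the work is essentially bookkeeping on top of those references.

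First I would establish the parametrisation: every irreducible $\sigma \subset L^{2}_{\disc}(\bG)$ belongs to \emph{some} $L^{2}_{\phi,\psi}(\bG)$ with $\phi\in\Psi_{2}(\bG)$. This is invoked case-by-case: for $\bG=\Sp(X)$ and $\bG=\rO(X)^{\circ}=\SO(X)$, this is Arthur's classification in \cite{Arthur-book-MR3135650}; for $\bG=\rO(X)$, I pass from $\SO(X)$ to $\rO(X)$ as in \cite{Atobe-Gan-MR3708200}, noting that restriction/induction along the index-two subgroup $\SO(X)\subset\rO(X)$ preserves the local $L$-parameter at almost all places; for $\bG=\rU(X)$, this is \cite{Mok-MR3338302} together with \cite{kaletha:_endos_class_repres} for the pure inner forms; and for $\bG=\Mp(X)$ this is the main theorem of \cite{Gan-Ichino-Mp-MR3866889}, where the $\psi$-dependence of $L^{2}_{\phi,\psi}$ enters through the Shimura--Waldspurger parametrisation at every place.

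Next I would prove that the sum $\sum_{\phi\in\Psi_{2}(\bG)} L^{2}_{\phi,\psi}(\bG)$ is orthogonal. Suppose $\phi\neq\phi'$ lie in $\Psi_{2}(\bG)$ with $\phi=\boxplus_{i}(\tau_{i},b_{i})$ and $\phi'=\boxplus_{j}(\tau_{j}',b_{j}')$. Since the factors of an elliptic parameter are pairwise distinct, there is some simple factor $(\tau,b)$ appearing in one but not the other, so the $L$-parameters $\varphi_{\phi_{v}}$ and $\varphi_{\phi_{v}'}$ differ at some unramified place $v$; equivalently, the Satake parameters differ, by strong multiplicity one for $\GL_{N}$ applied to the standard transfers. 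Hence any $\sigma \subset L^{2}_{\phi,\psi}(\bG)$ and $\sigma'\subset L^{2}_{\phi',\psi}(\bG)$ are not nearly equivalent and their local components at $v$ are non-isomorphic irreducible admissible representations, so $\sigma$ and $\sigma'$ are orthogonal in $L^{2}_{\disc}(\bG)$. Combining with the previous paragraph, the Hilbert direct sum exhausts $L^{2}_{\disc}(\bG)$, and intersecting with the dense subspace of automorphic forms gives the corresponding algebraic direct sum decomposition of $\cA_{2}(\bG)$.

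The main obstacle is purely expository: the four references formulate the classification in slightly different languages (connected vs.\ disconnected, pure inner forms, and genuine representations for $\Mp$), and one must verify that the notion of elliptic global $A$-parameter defined uniformly at the start of Section~\ref{sec:glob-arth-param} is compatible with each of them. For $\rO(X)$, one must also check that pairs $\{\sigma,\sigma\otimes\det\}$ restricting to the same $\SO(X)$-packet are accounted for without changing the $L$-parameter at almost all places, as in \cite{Atobe-Gan-MR3708200}; for $\Mp(X)$, one has to be careful that the $\psi$-twist is the \emph{only} source of ambiguity, which is exactly what \cite{Gan-Ichino-Mp-MR3866889} provides. Once these compatibilities are recorded, the crude decomposition follows without further analysis.
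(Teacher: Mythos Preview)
Your approach matches the paper's: the theorem is not proved from scratch but is assembled by citation, and the paper's ``proof'' is precisely the remark following the statement, listing the references case-by-case. Your sketch of the orthogonality argument via strong multiplicity one for $\GL_{N}$ is correct and is implicit in how each of those references defines the near equivalence classes.

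One omission worth flagging: for \emph{non-quasi-split} orthogonal groups your list of references is incomplete. Arthur's book \cite{Arthur-book-MR3135650} covers only the quasi-split case, and \cite{Atobe-Gan-MR3708200} reformulates that for $\rO(X)$ but does not extend it to inner forms. The paper supplements this with \cite{Chen-Zou-arxiv-2103} for non-quasi-split even orthogonal (and unitary) groups and \cite{Ishimoto-arxiv-2301} for non-quasi-split odd orthogonal groups; without these, your case analysis has a genuine gap whenever $X$ is a non-quasi-split quadratic space. Otherwise the bookkeeping you outline is exactly what is needed.
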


\begin{rmk}
  This crude form of Arthur's multiplicity formula  has been proved for $\Sp(X)$ and quasi-split $\rO(X)$ by Arthur \cite{Arthur-book-MR3135650}, for $\rU(X)$ by \cite{Mok-MR3338302} and \cite{kaletha:_endos_class_repres} and for $\Mp(X)$ by \cite{Gan-Ichino-Mp-MR3866889}.
  This is also proved for non-quasi-split even orthogonal (and also unitary groups) in \cite{Chen-Zou-arxiv-2103} and for non-quasi-split  odd orthogonal groups in \cite{Ishimoto-arxiv-2301}.
  Thus for all cases needed in this paper, Theorem~\ref{conj:arthur-mult-formula-crude} is known.
\end{rmk}

We have some further remarks on the orthogonal and unitary cases.
\begin{rmk}
  Arthur's statements use $\SO(X)$ rather than $\rO(X)$ and he needs to account for the outer automorphism of $\SO(X)$ when  $\dim X$ is  even.
  See the paragraph below \cite[Thm.~1.5.2]{Arthur-book-MR3135650}.
  The formulation for quasi-split even $\rO(X)$ is due to Atobe and Gan \cite[Thm.~7.1(1)]{Atobe-Gan-MR3708200}.
  For odd $\rO(X)$, which is isomorphic to $\SO(X)\times\mu_2$, the reformulation  of Arthur's result is easy.
  Let $T$ be a finite set of places of $F$. Assume that it has even cardinality.
  Let $\sgn_T$ be the automorphic character of $\mu_2(\AA_F)$ which is equal to the sign character at places in $T$ and the trivial character at places outside $T$.
  These give all the automorphic characters of $\mu_2(\AA_F)$.
  Then every irreducible automorphic representation $\pi$ of $\rO(X)(\AA_F)$ is of the form $\pi_0\boxtimes \sgn_T$ for some irreducible automorphic representation $\pi_0$ of $\SO(X)(\AA_F)$ and some finite set $T$ of places of even cardinality.
  A near equivalence class of $\rO(X)(\AA_F)$ then consists of all irreducible automorphic representations $\pi_0\boxtimes \sgn_T$ for $\pi_0$ running over a near equivalence class of $\SO(X)(\AA_F)$  and $\sgn_T$ running over all automorphic characters of $\mu_2(\AA_F)$.
\end{rmk}

\begin{rmk}
  For the $\rU(X)$ case, the global $A$-parameter depends on the choice of a sign and a conjugate self-dual character which determine an embedding of the $L$-group of $\rU(X)$ to the $L$-group of $\RGL_d$ where we recall that $d:=\dim X$.
  We refer the reader to \cite[Sec.~2.1]{Mok-MR3338302}, in particular (2.1.9) there, for details.
  In this work, we choose the $+1$ sign and the trivial character, which, in Mok's notation, means $\kappa =1$ and $\chi_\kappa=\mathbf{1}$.
  Then this corresponds to the standard base change of $\rU(X)$ to $\RGL_d$.
  We note that the  $L$-functions we use below are such that
  \begin{equation*}
    L_v(s,\pi_v\times\tau_v) = L_v(s,\BC(\pi_v)\otimes \tau_v),
  \end{equation*}
  for all places $v$, automorphic representations $\pi$ of $\bG(\AA_{F})$ and $\tau$ of $\RGL_a(\AA_{F})$ where $\BC$ denotes the standard base change.
\end{rmk}

By Theorem~\ref{conj:arthur-mult-formula-crude}, we  get
\begin{prop}\label{prop:tau-b-L-pole}
  Let $\pi$ be an irreducible automorphic representation of $\bG(\AA_{F})$ that occurs in $\cA_{2,\phi,\psi}(\bG)$. Then
  \begin{enumerate}
  \item if $\phi$ has a $(\tau,b)$-factor with $b$ maximal among all factors, then the partial $L$-function $L^S(s,\pi\times \tau^\vee)$ has a pole at $s=\frac{b+1}{2}$ and this is its maximal pole;
  \item if the partial $L$-function $L^S(s,\pi\times \tau^\vee)$ has a pole at $s=\frac{b'+1}{2}$, then $\phi$ has a $(\tau,b)$-factor with $b\ge b'$.
  \end{enumerate}
\end{prop}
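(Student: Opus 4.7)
The plan is to translate the existence of $(\tau,b)$-factors in $\phi$ into poles of $L^{S}(s,\pi\times\tau^{\vee})$ by combining the local identification of $L$-parameters forced by membership in $\cA_{2,\phi,\psi}(\bG)$ with Jacquet--Shalika's theorem on poles of Rankin--Selberg $L$-functions of cuspidal representations of general linear groups.

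First I would enlarge $S$ so that, for every $v\notin S$, the local $L$-parameter of $\pi_{v}$ equals $\varphi_{\phi_{v}}$ (after base change in the unitary case and after the $\psi_{v}$-dependent Shimura--Waldspurger identification in the metaplectic case). Writing $\phi_{v}=\boxplus_{i=1}^{r}(\tau_{i,v}\boxtimes\nu_{b_{i}})$ with $\nu_{b_{i}}$ the $b_{i}$-dimensional irreducible representation of $\SL_{2}(\CC)$, evaluating $\nu_{b_{i}}$ on $\diag(|w|^{1/2},|w|^{-1/2})$ decomposes
\begin{equation*}
\varphi_{\phi_{v}} \;=\; \bigoplus_{i=1}^{r}\bigoplus_{k=0}^{b_{i}-1} \varphi_{\tau_{i,v}}\otimes |\cdot|^{(b_{i}-1)/2-k}.
\end{equation*}
Multiplicativity of local $L$-factors then yields, after taking the product over $v\notin S$, the key identity
\begin{equation*}
L^{S}(s,\pi\times\tau^{\vee}) \;=\; \prod_{i=1}^{r}\prod_{k=0}^{b_{i}-1} L^{S}\!\left(s+\tfrac{b_{i}-1}{2}-k,\;\tau_{i}\times\tau^{\vee}\right).
\end{equation*}

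Now I invoke Jacquet--Shalika: $L^{S}(s,\tau_{i}\times\tau^{\vee})$ is holomorphic and non-vanishing on $\Re(s)\geq 1$ except for a simple pole at $s=1$ precisely when $\tau_{i}\cong\tau$. For assertion (1), the factor $(\tau,b)$ contributes a simple pole at $s_{0}:=(b+1)/2$ via the $k=b-1$ term. Any other factor $(\tau_{i},b_{i})$ with $\tau_{i}\cong\tau$ must satisfy $b_{i}<b$ by distinctness and maximality, and the equation $s_{0}+(b_{i}-1)/2-k=1$ forces $k=(b+b_{i}-2)/2$, which is strictly greater than $b_{i}-1$ and hence out of range; so no other factor contributes a pole at $s_{0}$, and the non-vanishing clause prevents cancellation against zeros of other factors. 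The rightmost pole contributed by any factor $(\tau_{i},b_{i})$ with $\tau_{i}\cong\tau$ sits at $(b_{i}+1)/2\leq (b+1)/2$, so $s_{0}$ is indeed the maximal pole. For assertion (2), a pole of the product at $(b'+1)/2$ must arise from some factor $L^{S}(\,\cdot\,,\tau_{i}\times\tau^{\vee})$ in the double product, which by Jacquet--Shalika forces $\tau_{i}\cong\tau$ and solvability of $k=(b'+b_{i}-2)/2$ in $[0,b_{i}-1]$, yielding $b_{i}\geq b'$.

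The main subtlety, rather than a genuine obstacle, is the unified handling of the different group types: the $L$-parameter of $\pi_{v}$ is read off the standard representation of $\check{\bG}\rtimes W_{F_{v}}$ directly for $\Sp(X)$ and $\rO(X)$, requires the base-change compatibility $L_{v}(s,\pi_{v}\times\tau_{v}^{\vee})=L_{v}(s,\BC(\pi_{v})\otimes\tau_{v}^{\vee})$ in the unitary case, and relies on the $\psi$-dependent Shimura--Waldspurger parametrisation for $\Mp(X)$. Once this is in place, the non-vanishing half of Jacquet--Shalika is what secures the uncancelled simple pole in (1).
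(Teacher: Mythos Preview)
Your argument is correct and follows the same route as the paper: both derive the factorisation $L^{S}(s,\pi\times\tau^{\vee})=\prod_{i}\prod_{k}L^{S}(s+\tfrac{b_{i}-1}{2}-k,\tau_{i}\times\tau^{\vee})$ from the local identification of $L$-parameters and then read off the poles via the standard analytic properties of Rankin--Selberg $L$-functions. Your treatment of non-cancellation in (1) is in fact more explicit than the paper's; for (2) you are implicitly using, as does the paper, that $L^{S}(s,\tau_{i}\times\tau^{\vee})$ is entire when $\tau_{i}\not\cong\tau$, which is slightly stronger than the $\Re(s)\ge 1$ statement you quoted but of course equally standard.
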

\begin{rmk}
  In the $\Mp(X)$ case, the $L$-function depends on $\psi$, but we suppress it from notation here.
\end{rmk}
\begin{proof}
  First we collect some properties of the Rankin--Selberg $L$-functions for $\GL_m\times \GL_n$.
  By the Rankin--Selberg method, for an \IUCARep{} $\tau$, $L^S(s , \tau\times \tau^\vee)$ has a simple pole at $s=1$ and is non-zero holomorphic  for $\Re(s)\ge 1$ and $s\neq 1$;
  for \IUCAReps{} $\tau$ and $\tau'$ such that $\tau\not\isom\tau'$, $L^S(s , \tau \times \tau'^\vee)$  is non-zero holomorphic  for $\Re(s)\ge 1$. These results can be found in Cogdell's notes \cite{cogdell:_notes_L-fun-GLn} which collect the results from \cite{JPSS-Rankin-Selberg-MR701565,Jacquet-Shalika-MR432596,Shahidi-MR498494,Shahidi-MR561534}. 

  Assume that  $\phi=    \boxplus_{i=1}^r  (\tau_i,b_i)$.
  Then  
  \begin{equation*}
    L^S(s,\pi\times\tau^\vee)= \prod_{i=1}^r \prod_{j=0}^{b_i-1} L^S(s - \frac{b_i-1}{2}+j, \tau_i\times \tau^\vee),
  \end{equation*}
  where $S$ is a finite set of places of $F$ outside of which all data are unramified.

  Assume that  $\phi$ has a $(\tau,b)$-factor with $b$ maximal among all factors, then by the properties of the Rankin--Selberg $L$-functions, we see that $L^S(s,\pi\times\tau^\vee)$ has a pole at $s=\frac{b+1}{2}$ and it is maximal.

  Next assume that the partial $L$-function $L^S(s,\pi\times \tau^\vee)$ has a pole at $s=\frac{b'+1}{2}$. If $\phi$ has no $(\tau,c)$-factor for any $c\in \ZZ_{>0}$, then $L^S(s,\pi\times\tau^\vee)$ is holomorphic for all $s\in\CC$ and we get a contradiction. Thus  $\phi$ has a $(\tau,b)$-factor.
  We take  $b$ maximal among all factors of the form $(\tau,*)$ in $\phi$. As $b$ may not be maximal among all simple factors of $\phi$, we can only conclude that $L^S(s,\pi\times\tau^\vee)$ is holomorphic for $\Re(s)>\frac{b+1}{2}$. Thus $b'\le b$.  
\end{proof}

Given an \ICARep{} $\pi$, write $\phi_{\pi}$ for the global $A$-parameter of $\pi$.
By studying poles of $L^S(s,\pi\times\tau^\vee)$ for varying $\tau$'s, we can detect the existence of $(\tau,b)$-factors with maximal $b$ in $\phi_{\pi}$. We would also like to construct an irreducible cuspidal automorphic representation with global $A$-parameter $\phi_{\pi} \boxminus (\tau,b)$ which means removing the  $(\tau,b)$-factor from $\phi_\pi$ if $\phi_\pi$  has a $(\tau,b)$-factor. Doing this recursively, we will be able to compute the global $A$-parameter of a given irreducible cuspidal automorphic representation. In reverse, the construction should produce concrete examples of cuspidal automorphic representations in a given global $A$-packet with an elliptic global $A$-parameter. This will be investigated in our future work.

In this article, we focus  our attention on the study of poles of $L^S(s,\pi\times\tau^\vee)$ where $\tau$ is a conjugate self-dual \ICARep{} of $\RGL_1(\AA)$. Now we write $\chi$ for $\tau$ to emphasise that we are considering the case of twisting by characters. This case  has been well-studied and it is known that the poles of $L^{S}(s,\pi\times\chi^{\vee})$ are intricately  related to invariants of theta correspondence via the Rallis inner product formula which relates the inner product of two theta lifts to a residue or a value of the $L$-function.
 We refer the readers to \cite{Kudla-Rallis-Siegel-Weil-first-term-id-MR1289491,Wu-MR3595433,GQT-MR3279536,Yamana-Lfun-MR3211043} for details. One of the key steps is the regularised Siegel--Weil formula which relates a theta integral to a residue or a value of a Siegel Eisenstein series. Our work considers an Eisenstein series which is not of Siegel type, but which is  closely  related to $L(s,\pi\times\chi^{\vee})$.

\section{Eisenstein series attached to $\chi\boxtimes\pi$}
\label{sec:eisenst-seri}

In this section we deviate slightly from the notation in Section~\ref{sec:glob-arth-param}. We use $G(X)$ to denote one of $\Sp(X)$, $\rO(X)$ and $\rU(X)$. We let $\bG(X)$ be a cover group of $G(X)$, which means $\bG(X)=\Sp(X)$ or $\Mp(X)$ if $G(X)=\Sp(X)$, $\bG(X)=\rO(X)$ if $G(X)=\rO(X)$ and $\bG(X)=\rU(X)$ if $G(X)=\rU(X)$. We adopt  similar notation to that in \cite{MW-eis-book-MR1361168}.
We define Eisenstein series on a larger group of the same type as $\bG(X)$ and collect some results on their maximal positive poles.

Let $\pi$ be an \ICARep{} of $\bG(X)(\AA)$. We always assume that $\pi$ is genuine when $\bG(X)=\Mp(X)$. Let $\chi$ be a conjugate self-dual automorphic character of $\R_{E/F}\GL_{1}(\AA)=\AA_{E}^{\times}$. When $E\neq F$, we define
\begin{equation}
  \label{eq:epsilon-chi}
  \epsilon_{\chi} =
  \begin{cases}
    0, &\quad\text{if $\chi|_{\AA_{F}^{\times}} = \mathds{1}$};\\
    1, &\quad\text{if $\chi|_{\AA_{F}^{\times}} = \varepsilon_{E/F}$}.
  \end{cases}
\end{equation}

Let $a$ be a positive integer.
Let $X_{a}$ be the $\epsilon$-skew Hermitian space over $E$ that is formed from $X$ by adjoining $a$-copies of the hyperbolic plane.
More precisely, let $\ell_{a}^{+}$ (resp. $\ell_{a}^{-}$) be a totally isotropic $a$-dimensional $E$-vector space spanned by $e_{1}^{+},\ldots , e_{a}^{+}$ (resp. $e_{1}^{-},\ldots , e_{a}^{-}$) such that $\form{e_{i}^{+}}{e_{j}^{-}}=\delta_{ij}$ where $\delta_{ij}$ is the Kronecker symbol.
Then $X_{a}=\ell_{a}^{+}\oplus X  \oplus \ell_{a}^{-}$ with $X$ orthogonal to $\ell_{a}^{+}\oplus \ell_{a}^{-}$.

Let $G(X_{a})$ be the isometry group of $X_{a}$.
Let $Q_{a}$ be the parabolic subgroup of $G(X_{a})$ that stabilises $\ell_{a}^{-}$.
Write $Q_{a}=M_{a}N_{a}$ in the Levi decomposition with $N_{a}$ being the unipotent radical and $M_{a}$ the standard Levi subgroup.
We have an isomorphism 
\begin{equation*}
  m: \RGL_{a}\times G(X)\rightarrow M_{a}.
\end{equation*}
where we identify $\RGL_{a}$ with $\RGL(\ell_{a}^{+})$.
Let $\rho_{Q_{a}}$ be the half sum of the positive roots in $N_{a}$, which can be viewed as an element in $\fa_{M_{a}}^{*}:= \Rat(M_{a})\otimes_{\ZZ}\RR$ where $\Rat(M_{a})$ is the group of rational characters of $M_{a}$.
We note that as $Q_{a}$ is a maximal parabolic subgroup, $\fa_{M_{a}}^{*}$ is one-dimensional.
Via the Shahidi normalisation \cite{Shahidi-eis-book-MR2683009}, we identify $\fa_{M_{a}}^{*}$ with $\RR$ and thus may regard $\rho_{Q_{a}}$ as the real number
\begin{align*}
  \half(\dim_{E} X +a), \quad &\text{if $G(X_{a})$ is unitary};\\
  \half(\dim_{E} X +a-1), \quad &\text{if $G(X_{a})$ is orthogonal};\\
  \half(\dim_{E} X +a+1), \quad &\text{if $G(X_{a})$ is symplectic}.
\end{align*}
Let $K_{a,v}$ be a good maximal compact subgroup of $G(X_{a})(F_{v})$ in the sense that the Iwasawa decomposition holds and set $K_{a}=\prod_{v}K_{a,v}$.

Let $\cA^{Q_{a}}(s,\chi\boxtimes\pi)$ denote the space of $\CC$-valued smooth functions $f$ on $N_{a}(\AA)M_{a}(F)\lmod G(X_{a})(\AA)$ such that
\begin{enumerate}
\item $f$ is right $K_{a}$-finite;
\item for any $x\in\RGL_{a}(\AA)$ and $g\in G(X_{a})(\AA)$ we have
  \begin{equation*}
    f(m(x,I)g) = \chi(\det(x))|\det(x)|_{\AA_{E}}^{s+\rho_{Q_{a}}} f(g);
  \end{equation*}
\item for any fixed $k\in K_{a}$, the function $h\mapsto f(m(I,h)k)$ on $G(X)(\AA)$ is in the space of $\pi$.
\end{enumerate}

Now let $\bG(X)=\Mp(X)$. This case depends on $\psi$. Let $\til{\GL}_{1}(F_{v})$ be the double cover of $\GL_{1}(F_{v})$ defined as follows. As a set it is $\GL_{1}(F_{v}) \times \mu_{2}$ and the multiplication is given by
\begin{equation*}
  (g_1,\zeta_1) (g_2, \zeta_2) = (g_1g_2,\zeta_1\zeta_2 ( g_1,  g_2)_{F_{v}})
\end{equation*}
which has a Hilbert symbol twist when multiplying the $\mu_2$-part. Analogously we define the double cover $\til{\GL}_{1}(\AA)$ of $\GL_{1}(\AA)$.
Let $\chi_{\psi,v}$ denote the genuine character of $\til{\GL}_{1}(F_{v})$ defined by
\begin{equation*}
  \chi_{\psi,v}((g,\zeta)) = \zeta \gamma_{v}(g,\psi_{\half,v})^{-1}
\end{equation*}
where $\gamma_{v}(\cdot,\psi_{\half,v})$ is a $4$-th root of unity defined via the Weil index.
It is the same  one as in \cite[page~521]{Gan-Ichino-formal-degree-MR3166215} except that we have put in the subscripts $v$.
Then 
\begin{equation*}
  \chi_{\psi}((g,\zeta)) = \zeta \prod_{v} \gamma_{v}(g_{v},\psi_{\half,v})^{-1}
\end{equation*}
is a genuine automorphic character of $\til{\GL}_{1}(\AA)$. Let $\til{K}_{a}$ denote the preimage of $K_{a}$ under the projection $\Mp(X_{a})(\AA)\rightarrow \Sp(X_{a})(\AA)$. We will also use $\til{\ }$ to denote the preimages of other subgroups of $\Sp(X_{a})(\AA)$. Let $\til{m}$ be the isomorphism 
\begin{equation*}
  \til{\GL}_{a}(\AA) \times_{\mu_{2}} \bG(X)(\AA)\rightarrow \til{M}_{a}(\AA)
\end{equation*}
that lifts $m: \GL_{a}(\AA)\times G(X)(\AA)\rightarrow M_{a}(\AA)$. Let $\til{\det}$ be the homomorphism 
\begin{align*}
  \til{\GL}_{a}(\AA) &\rightarrow \til{\GL}_{1}(\AA)\\
  (x,\zeta) &\mapsto (\det(x),\zeta).
\end{align*}
We keep writing $\det$ for the non-genuine homomorphism
\begin{align*}
  \til{\GL}_{a}(\AA) &\rightarrow \GL_{1}(\AA)\\
  (x,\zeta) &\mapsto \det(x).
\end{align*}
Given a non-genuine representation $\tau$ of $\til{\GL}_{a}(\AA)$, we can twist it by $\chi_{\psi}\circ\til{\det}$ to get a genuine representation which we denote by $\tau\chi_{\psi}$.

We remark that there are canonical embeddings of $N_{a}(\AA)$ and $\Sp(X_{a})(F)$ to $\Mp(X_{a})(\AA)$, so we may regard them as subgroups of $\bG(X_{a})(\AA)$.
Let $\cA^{Q_{a}}_{\psi}(s,\chi\boxtimes\pi)$ denote the space of $\CC$-valued smooth functions $f$ on $N_{a}(\AA)M_{a}(F)\lmod \bG(X_{a})(\AA)$ such that
\begin{enumerate}
\item $f$ is right $\til{K}_{a}$-finite;
\item for any $x\in\til{\GL}_{a}(\AA)$ and $g\in \bG(X_{a})(\AA)$ we have
  \begin{equation*}
    f(\til{m}(x,I)g) = \chi\chi_{\psi}(\til{\det}(x))|\det(x)|_{\AA_{E}}^{s+\rho_{Q_{a}}} f(g);
  \end{equation*}
\item for any fixed $k\in \til{K}_{a}$, the function $h\mapsto f(\til{m}(I,h)k)$ on $\bG(X)(\AA)$ is in the space of $\pi$.
\end{enumerate}

To unify notation, we will also write $\cA^{Q_{a}}_{\psi}(s,\chi\boxtimes\pi)$ for  $\cA^{Q_{a}}(s,\chi\boxtimes\pi)$ in the non-metaplectic case.
It should be clear from the context whether we are treating the  $\Sp(X)$ case or the $\Mp(X)$ case.

Now return to the general case, so $\bG(X)$ is one of $\Sp(X)$, $\rO(X)$, $\rU(X)$ and $\Mp(X)$.
Let $f_{s}$ be a holomorphic section of $\cA^{Q_{a}}_{\psi}(s,\chi\boxtimes\pi)$.
We associate to it the Eisenstein series
\begin{equation*}
  E_{\psi}^{Q_{a}}(g,f_{s}):=\sum_{\gamma\in Q_{a}(F)\lmod G(X_{a})(F)}f_{s}(\gamma g).
\end{equation*}
Note that the series is over $\gamma\in Q_{a}(F)\lmod \Sp(X_{a})(F)$ when $\bG(X)=\Mp(X)$.
By Langlands' theory of Eisenstein series \cite[IV.1]{MW-eis-book-MR1361168}, this series is absolutely convergent for $\Re(s)>\rho_{Q_{a}}$, has meromorphic continuation to the whole $s$-plane, its poles lie on root hyperplanes and there are only finitely many poles in the positive Weyl chamber.
By our identification of $\fa_{M_{a}}^{*}$ with $\RR$ and the fact that $\chi$ is conjugate self-dual, the statements on poles mean that the poles are all real and that  there are  finitely many poles in the half plane $\Re(s)>0$.

We give the setup for any positive integer $a$, though we will only need $a=1$ in the statements of our results. However the proofs require `going up the Witt tower' to  $\bG(X_{a})$ for $a$ large enough. Since we plan to prove analogous results for quaternionic unitary groups in the future, we keep the setup for general $a$. 


There is a relation between poles of $L$-functions and the Eisenstein series.
\begin{prop}\label{prop:L-pole-Eis-pole}
  \begin{enumerate}
  \item Assume that the partial $L$-function $L^S_{\psi}(s,\pi\times\chi^\vee)$ has its rightmost positive pole at $s=s_0$.
    Then $E_{\psi}^{Q_{1}}(g,f_s)$ has a pole at $s=s_0$.
  \item \label{item:1} Assume that the partial $L$-function $L^S_{\psi}(s,\pi\times\chi^\vee)$ is non-vanishing at $s=\half$ and is holomorphic for $\Re(s)>\half$. Assume that
    \begin{align*}
      &\text{$\bG(X)=\rU(X)$ with $\dim X \equiv \epsilon_{\chi} \pmod 2$};\\      
      &\text{$\bG(X)=\rO(X)$ with $\dim X$ odd};\\
      &\text{$\bG(X)=\Mp(X)$}.      
    \end{align*}
    Then $E_{\psi}^{Q_{1}}(g,f_s)$ has a pole at $s=\half$.
  \end{enumerate}
\end{prop}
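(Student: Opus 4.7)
The plan is to study the poles of $E^{Q_1}_\psi(g,f_s)$ through its constant term along $Q_1$. Since $Q_1$ is maximal, the standard computation (cf.\ \cite[II.1.7]{MW-eis-book-MR1361168}) gives the constant term as $f_s + M(s)f_s$, where $M(s)$ is the standard global intertwining operator associated to the non-trivial element of the relative Weyl group. Because $f_s$ is holomorphic in $s$, Langlands' theory shows that $E^{Q_1}_\psi(\cdot,f_s)$ has a pole at $s=s_0>0$ if and only if $M(s)f_s$ does for some choice of holomorphic section; hence it suffices to locate the poles of $M(s)$. Factoring $M(s)$ as a restricted tensor product of local intertwining operators, applying the Gindikin--Karpelevich formula at the unramified places, and extracting the Langlands--Shahidi normalisation (cf.\ \cite{Shahidi-eis-book-MR2683009}) yields
\[
  M(s) = \frac{L^S_\psi(s,\pi\times\chi^\vee)}{L^S_\psi(s+1,\pi\times\chi^\vee)} \cdot \frac{L^S(2s,\eta)}{L^S(2s+1,\eta)} \cdot N(s),
\]
where $\eta$ is a Hecke character of $\AA_F^\times$ dictated by the $L$-group data of $\bG(X_1)$ (it involves $\chi|_{\AA_F^\times}$, the Galois character $\varepsilon_{E/F}$, and $\chi_\psi$-data in the metaplectic case), and $N(s)$ gathers the normalised intertwining operators together with the local factors at the places in $S$.

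For part (1), since $s_0$ is the rightmost positive pole of $L^S_\psi(s,\pi\times\chi^\vee)$, the denominator $L^S_\psi(s+1,\pi\times\chi^\vee)$ is holomorphic and non-vanishing at $s=s_0$. The abelian ratio has at most a simple pole at $s=\half$ and so cannot cancel the pole at $s_0$, and $N(s)f_s$ can be arranged to be non-zero at $s_0$ by an appropriate choice of local data at the places in $S$. Hence $M(s)f_s$ inherits the pole of the numerator at $s=s_0$, which forces the pole of $E^{Q_1}_\psi(g,f_s)$.

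For part (2), the hypothesis that $L^S_\psi(s,\pi\times\chi^\vee)$ is holomorphic for $\Re(s)>\half$ and non-zero at $s=\half$ makes the first ratio in the displayed formula holomorphic and non-zero at $s=\half$. Any pole of $M(s)$ at $s=\half$ must therefore come from $L^S(2s,\eta)$ having a pole at $s=\half$, i.e.\ from $\eta$ being trivial. A case-by-case inspection of the normalising data confirms that the three listed hypotheses---$\rU(X)$ with $\dim X \equiv \epsilon_\chi \pmod 2$, $\rO(X)$ with $\dim X$ odd, and $\Mp(X)$ (where the twist by $\chi_\psi$ shifts the abelian factor into the trivial character)---are precisely the situations in which $\eta$ degenerates to the trivial character of $\AA_F^\times$. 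Choosing $f_s$ so that the $S$-local factors assembled in $N(s)$ do not vanish at $s=\half$ then yields the pole.

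The main technical hurdle is the case-by-case identification of $\eta$ and the verification that it trivialises exactly under the three listed hypotheses; this is where the Galois involution $\varrho$ in the unitary case and the genuine shift by $\chi_\psi$ in the metaplectic case enter decisively, and it explains why the symplectic and even orthogonal cases are excluded from part~(2). A secondary but routine point is checking that sections can be chosen so that neither the normalised intertwining operators nor the ramified local factors accidentally cancel the pole; this is standard given the flexibility of the induced space.
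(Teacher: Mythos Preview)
The paper does not supply its own proof of this proposition; immediately after the statement there is only a remark citing the result from the literature (M{\oe}glin for the orthogonal case, Jiang--Wu for the symplectic and unitary cases, Wu for the metaplectic case). Your sketch is precisely the standard argument carried out in those references: compute the constant term along $Q_1$, factor the intertwining operator via Gindikin--Karpelevich to extract the Langlands--Shahidi $L$-function ratios, and identify the abelian factor case by case. So your approach matches the one the paper is implicitly invoking.

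One small correction: in part~(1) your worry that ``the abelian ratio has at most a simple pole at $s=\tfrac12$ and so cannot cancel the pole at $s_0$'' is aimed at the wrong danger. A pole of the abelian ratio would only reinforce the pole of $M(s)$; what could actually cancel the pole coming from $L^S_\psi(s,\pi\times\chi^\vee)$ is a \emph{zero} of $L^S(2s,\eta)$ or of $L^S_\psi(s+1,\pi\times\chi^\vee)$. The former is excluded for $\Re(s)\ge\tfrac12$ by non-vanishing of Hecke $L$-functions on $\Re(s)=1$, and the latter by absolute convergence once $s_0$ is not too small (in all the applications in the paper one has $s_0\ge 1$). This is handled carefully in the cited references, but it is worth flagging as the genuine analytic point rather than the one you named.
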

\begin{rmk}
  This is  \cite[Proposition~2.2]{JW-sympletic-MR3805648} in the symplectic case, \cite[Proposition~3.2]{wu22:_period-metaplectic-JNT-in-press} in the metaplectic case, \cite[Proposition~2.2]{JW-unitary-MR3435720} in the unitary case and \cite[Remarque~2]{Moeglin-eis-pole-theta-MR1473165} and  \cite[Proposition~2.2]{JW-unitary-MR3435720} in the orthogonal case.
\end{rmk}
\begin{rmk}
  The allowed $\bG(X)$'s in item~\eqref{item:1} are those for which we have theta dichotomy and epsilon dichotomy (in the local non-archimedean setting). See \cite[Corollary~9.2, Theorem~11.1]{Gan-Ichino-formal-degree-MR3166215}.
\end{rmk}
\begin{rmk}
  When $\pi$ is  a cuspidal member in a generic global $A$-packet of $\bG(X)(\AA)$, there is a more precise result. See Theorem~\ref{thm:generic-param-L-pole-Eis-pole} which was proved in \cite{JLZ-MR3079762} and strengthened in \cite{JZ-automorphic-modules-MR4088351}.
\end{rmk}

We summarise the results on the maximal positive pole of $E_{\psi}^{Q_{1}}(g,f_s)$ from \cite[Theorem~3.1]{GJS-MR2540878},  \cite[Theorem~3.1]{JW-unitary-MR3435720}, \cite[Theorem~2,8]{JW-sympletic-MR3805648} and \cite[Theorem~4.2]{wu22:_period-metaplectic-JNT-in-press}.

\begin{thm}\label{thm:max-pole-eis}
  The maximal positive  pole of $E_{\psi}^{Q_{1}}(g,f_s)$ is of the form
  \begin{equation}\label{eq:eis-max-pole}
    s=
    \begin{cases}
      \half (\dim X + 1 - (2j+\epsilon_\chi)),   & \quad \text{if $\bG(X)=\rU(X)$};\\
      \half (\dim X  - 2j), & \quad \text{if $\bG(X)=\rO(X)$};\\
      \half (\dim X + 2 - 2j), & \quad \text{if $\bG(X)=\Sp(X)$};\\
      \half (\dim X + 2 - (2j+1)), & \quad \text{if $\bG(X)=\Mp(X)$}
    \end{cases}
  \end{equation}
  where $j\in\ZZ$ such that
  \begin{equation}
    \label{eq:like-dimY}
    \begin{cases}
      r_X \le 2j+\epsilon_\chi < \dim X + 1,  & \quad \text{if $\bG(X)=\rU(X)$};\\
      r_X \le 2j <\dim X, & \quad \text{if $\bG(X)=\rO(X)$};\\
      r_X \le 2j < \dim X + 2, & \quad \text{if $\bG(X)=\Sp(X)$};\\
      r_X \le 2j+1 < \dim X + 2,  & \quad \text{if $\bG(X)=\Mp(X)$}
    \end{cases}
  \end{equation}
  where $r_X$ denotes the Witt index of $X$.
\end{thm}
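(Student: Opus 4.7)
The plan is to exploit the well-known link between poles of the non-Siegel Eisenstein series $E_\psi^{Q_1}(g,f_s)$ and the first occurrence of $\pi$ in a Witt tower determined by $\chi$. I expect the argument to follow the pattern of the four cited papers (Ginzburg--Jiang--Soudry; Jiang--Wu in the unitary and symplectic cases; Wu in the metaplectic case) and to split into three steps: reduction to poles of an $L$-function via the constant term, climbing the Witt tower to identify the residue with a theta integral via the regularised Siegel--Weil formula, and invoking a first-occurrence lower bound to control the admissible $j$.

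First, since $Q_1$ is a maximal parabolic, computing the constant term of $E_\psi^{Q_1}(g,f_s)$ along $Q_1$ only involves two Weyl elements, yielding
\begin{equation*}
  f_s + M(w_0,s) f_s.
\end{equation*}
By Langlands--Shahidi normalisation, the denominator of $M(w_0,s)$ is a product of $L^S(s,\pi\times\chi^\vee)$ with an abelian Hecke $L$-function whose exact shape depends on the type of $\bG$. Hence the positive poles of $E_\psi^{Q_1}(g,f_s)$ are located among the positive poles of these $L$-functions.

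Next, to convert an abstract $L$-pole at some $s_0$ into the arithmetic form~\eqref{eq:eis-max-pole}, I would climb the Witt tower: for $a$ large, study $E_\psi^{Q_a}(g,f_s)$, whose pole structure at $s_0$ matches the case $a=1$ by the tower property of Eisenstein series. Pairing the residue at $s_0$ against a theta kernel $\theta_\psi^{X_a,Y}$, with $Y$ ranging over the Witt tower of Hermitian, quadratic, or symplectic spaces attached to $\chi$, and invoking the regularised Siegel--Weil formula (Kudla--Rallis, extended by Ichino, Yamana, Gan--Qiu--Takeda, and by Gan--Ichino in the metaplectic case), the non-vanishing of this residue becomes equivalent to the non-vanishing of a theta lift from $\bG(X)$ to the isometry group of some $Y$ in the tower. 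The Kudla--Rallis pole formula then matches $\dim Y$ to
\begin{equation*}
  s_0 = \half(\dim X + c_{\bG} - \dim Y),
\end{equation*}
with $c_{\bG}=1,0,2,2$ in the unitary, orthogonal, symplectic, metaplectic cases, and dictates the parity of $\dim Y$, producing the parameterisation $\dim Y = 2j+\epsilon_\chi,\, 2j,\, 2j,\, 2j+1$ and hence~\eqref{eq:eis-max-pole} exactly.

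Finally, the upper bound in~\eqref{eq:like-dimY} is simply the constraint $s_0 > 0$, but the lower bound $r_X \le \dim Y$ is the main obstacle. It asserts that the first dimension in the Witt tower at which $\pi$ admits a non-zero theta lift cannot fall below the Witt index of $X$. I would argue by the Rallis tower property combined with cuspidality of $\pi$: if $\dim Y < r_X$, a descent argument using unipotent periods forces the would-be lift (and hence the residue) to vanish, contradicting $s_0$ being a pole. For the orthogonal case this will additionally require local theta dichotomy in order to track which $Y$ in the tower actually supports a non-zero lift. The case-by-case bookkeeping needed to make this argument uniform across all four families will, I expect, be the most delicate piece of the proof.
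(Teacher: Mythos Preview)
The paper does not actually give a proof of this theorem: it is stated as a summary of results from the four cited papers (Ginzburg--Jiang--Soudry, Jiang--Wu in the unitary and symplectic cases, and Wu in the metaplectic case). The only proof-relevant content in the paper itself is in the surrounding remarks: Remark~\ref{rmk:eis-pole-rx-le} says that the shape of the pole and the strict upper bound in~\eqref{eq:like-dimY} are obtained by relating $E_\psi^{Q_1}$ to a \emph{Siegel} Eisenstein series whose poles are completely known, while the lower bound $r_X\le\cdots$ is obtained (see the sentence after Theorem~\ref{thm:LO-equiv-eis-max-pole}) from the identification of the middle quantity with $\LO_X^\chi(\pi)$ together with the stable range inequality $\LO_X^\chi(\pi)\ge r_X$ coming from Rallis \cite[Theorem~I.2.1]{Rallis-MR743016}.

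Your outline is in the right spirit and broadly matches the strategy of the cited papers, but two points deserve comment. First, your Step~1 (locating poles via the constant term and the unnormalised $M(w_0,s)$) is not how the paper frames it: the paper's remarks point instead to a direct comparison with Siegel Eisenstein series, whose pole set is already known in closed form, and this is what produces the half-integer parametrisation in~\eqref{eq:eis-max-pole} without needing to know the poles of $L^S(s,\pi\times\chi^\vee)$ in advance. Your Step~2 (climbing the tower and invoking the regularised Siegel--Weil formula) is essentially the mechanism by which that comparison is carried out in the cited works, so Steps~1 and~2 are really one step, not two.

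Second, your Step~3 is where there is a genuine gap. The lower bound $r_X\le \dim Y$ is not proved by a ``descent argument using unipotent periods'' nor does it require local theta dichotomy in the orthogonal case; dichotomy governs \emph{which} Witt tower carries the lift, not how low the first occurrence can be. What is actually used is that the middle quantity in~\eqref{eq:like-dimY} equals $\LO_X^\chi(\pi)$ (this is Theorem~\ref{thm:LO-equiv-eis-max-pole}), and then one invokes the stable-range nonvanishing theorem of Rallis: if $\dim Y<r_X$ then $G(X)$ is in the stable range for $G(Y)$, so any cuspidal $\sigma$ on $G(Y)$ lifts nontrivially and non-cuspidally to $G(X)$; combined with the involutivity of the first-occurrence lift, this rules out $\pi$ (which is cuspidal) occurring as such a lift, hence $\Theta_X^{Y}(\pi)=0$. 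You should replace your unipotent-period sketch by this stable-range argument and drop the dichotomy remark.
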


\begin{rmk}
  The middle quantities in the inequalities of \eqref{eq:like-dimY} are, in fact, the lowest occurrence index of $\pi$ in the global theta lift which depends on $\chi$ and $\psi$.
  (See Theorem~\ref{thm:LO-equiv-eis-max-pole}.)
  In some cases, the lowest occurrence index turns out to be independent of $\psi$. 
  \begin{rmk}\label{rmk:eis-pole-rx-le}
    To derive  the inequalities $r_{X}\le ... $, we already need to make use of properties of the global theta correspondence. The other parts of the statements can be derived by relating our Eisenstein series to Siegel Eisenstein series whose poles are completely known. We note that via the Siegel--Weil formula, Siegel Eisenstein series are related to global theta correspondence.
  \end{rmk}

\end{rmk}

\section{Theta correspondence}
\label{sec:theta-correspondence}

We keep the notation of Section~\ref{sec:eisenst-seri}. First we define the theta lifts and the two invariants called the first occurrence index and the lowest occurrence index. Then we relate the invariants to poles of our Eisenstein series.

Recall that we have taken an $\epsilon$-skew Hermitian space $X$ over $E$.
Let $Y$ be an $\epsilon$-Hermitian space equipped with the form $\form{\ }{\ }_{Y}$. 
We note that $\form{\ }{\ }_{Y}$ is an $F$-bilinear pairing
\begin{align*}
  \form{\ }{\ }_Y : Y \times Y \rightarrow E
\end{align*}
such that
\begin{equation*}
  \form{y_{2}}{y_{1}}_Y = \epsilon\form{y_{1}}{y_{2}}_{Y}^\varrho, \quad \form{y_{1}a}{y_{2}b}_{Y} = a^\varrho \form{y_{1}}{y_{2}}_{Y}b
\end{equation*}
for all $a,b\in E$ and $y_{1},y_{2}\in Y$.
Let $G(Y)$ be its isometry group. We note that $G(X)$ acts on $X$ from the right while $G(Y)$ acts on $Y$ from the left.
Let $W$ be the vector space $\RS_{E/F}(Y\otimes_{E}X)$ over $F$ and equip it with the symplectic form
\begin{align*}
  \form{\ }{\ }_{W} : W \times W \rightarrow F
\end{align*}
given by
\begin{align*}
  \form{y_{1}\otimes x_{1}}{y_{2}\otimes x_{2}}_{W}= \tr_{E/F}(\form{y_{1}}{y_{2}}_{Y}\form{x_{1}}{x_{2}}_{X}^{\varrho}).
\end{align*}
With this set-up, $G(X)$ and $G(Y)$ form a reductive dual pair inside $\Sp(W)$. Let $W=W^{+}\oplus W^{-}$ be a polarisation of $W$. Let $\Mp^{\CC^{1}}(W)(F_{v})$ be the $\CC^{1}$-metaplectic extension of $\Sp(W)(F_{v})$. 
Let $\omega_{v}$ denote the Weil representation of $\Mp^{\CC^{1}}(W)(F_{v})$ realised on the space of Schwartz functions $\cS(W^{+}(F_{v}))$. The Weil representation depends on the additive character $\psi_{v}$, but we suppress it from notation. When $v$ is archimedean, we actually take the Fock model \cite{Howe-MR985172} rather than the full Schwartz space and it is a $(\mathfrak{sp}(W)(F_{v}),\til{K}_{\Sp(W),v})$-module but  we abuse language and call it a representation of $\Mp^{\CC^{1}}(W)(F_{v})$.  When neither $G(X)$ or $G(Y)$ is an odd orthogonal group, by \cite{Kudla-MR1286835} there exists a homomorphism 
\begin{equation*}
  G(X)(F_{v})\times G(Y)(F_{v})\rightarrow\Mp^{\CC^{1}}(W)(F_{v})
\end{equation*}
that lifts the obvious map $G(X)(F_{v})\times G(Y)(F_{v})\rightarrow\Sp(W)(F_{v})$. In this case, set $\bG(X)=G(X)$ (resp. $\bG(Y)=G(Y)$). When $G(X)$ is an odd orthogonal group, we take $\bG(Y)(F_{v})$ to be the metaplectic double cover of $G(Y)(F_{v})$ and set $\bG(X)=G(X)$.  Then by \cite{Kudla-MR1286835} there exists a homomorphism 
\begin{equation*}
  G(X)(F_{v})\times \bG(Y)(F_{v})\rightarrow\Mp^{\CC^{1}}(W)(F_{v})
\end{equation*}
that lifts  $G(X)(F_{v})\times G(Y)(F_{v})\rightarrow\Sp(W)(F_{v})$. The case is analogous when $G(Y)$ is an odd orthogonal group.  In any case, we get a homomorphism 
\begin{equation*}
  \iota_{v}:\bG(X)(F_{v})\times \bG(Y)(F_{v})\rightarrow \Mp^{\CC^{1}}(F_{v}).
\end{equation*}
It should be clear from the context when $\bG(X)$ (resp.
$\bG(Y)$) refers to a cover group and when it is not truly a cover.
In the unitary case, there are many choices of $\iota_{v}$.
Once we fix $\chi$ and an additional character $\chi_{2}$, then $\iota_{v}$ is fixed.
This is worked out in great details in \cite[Section~1]{Harris-Kudla-Sweet-MR1327161}. Our $(\chi,\chi_{2})$ matches  $(\chi_{1},\chi_{2})$ in \cite[(0.2)]{Harris-Kudla-Sweet-MR1327161}.
We note that $Y$ should be compatible with $\chi$ and $\chi$ determines the embedding of $G(X)(\AA)$ into $\Mp^{\CC^{1}}(\AA)$ whereas $X$ should be compatible with $\chi_{2}$ and $\chi_{2}$ determines the embedding of $G(Y)(\AA)$ into $\Mp^{\CC^{1}}(\AA)$.
By `compatible', we mean $\epsilon_{\chi} \equiv \dim Y \pmod 2$ (resp.
$\epsilon_{\chi_{2}} \equiv \dim X \pmod 2$).
See \cite{Kudla-MR1286835} for more details.
We pull back $\omega_{v}$ to $\bG(X)(F_{v})\times \bG(Y)(F_{v})$ via $\iota_{v}$ and still denote the representation by $\omega_{v}$.

Denote by $\iota$ the adelic analogue of $\iota_{v}$. We also have the (global) Weil representation $\omega$ of $\Mp^{\CC^{1}}(\AA)$  on the Schwartz space $\cS(W^{+}(\AA))$ and its pullback via $\iota$ to $\bG(X)(\AA)\times \bG(Y)(\AA)$.

Then we can define the theta function which will be used as a kernel function. Let
\begin{equation*}
  \theta_{X,Y}(g,h,\Phi) := \sum_{w\in W^{+}(F)} \omega(\iota(g,h))\Phi(w)
\end{equation*}
for $g\in \bG(X)(\AA)$, $h\in \bG(Y)(\AA)$ and $\Phi\in\cS(W^{+}(\AA))$. It is absolutely convergent and is an automorphic form on $\bG(X)(\AA)\times \bG(Y)(\AA)$.
For $f\in\pi$, set
\begin{equation*}
  \theta_{X}^{Y}(f,\Phi) := \int_{[\bG(X)]} \overline{f(g)}\theta_{X,Y}(g,h,\Phi) dg.
\end{equation*}
Note that we write $[\bG(X)]$ for $G(X)(F)\lmod G(X)(\AA)$ when $\bG(X)$ is not metaplectic and $G(X)(F)\lmod \bG(X)(\AA)$ or more explicitly $\Sp(X)(F)\lmod \Mp(X)(\AA)$ when $\bG(X)$ is metaplectic.
This is an automorphic form on $\bG(Y)(\AA)$. It depends on $\chi$ and $\chi_{2}$ in the unitary case and when we want to emphasise the dependency, we will write $\theta_{X,\chi_{2}}^{Y,\chi}(f,\Phi)$.
Let $\Theta_{X}^{Y}(\pi)$ denote the space of functions spanned by $\theta_{X}^{Y}(f,\Phi)$'s and let $\Theta_{X,,\chi_{2}}^{Y,\chi}(\pi)$ denote the space of functions spanned by $\theta_{X,\chi_{2}}^{Y,\chi}(f,\Phi)$'s in the unitary case. 

From now on assume that $Y$ is anisotropic (possibly zero), so that it sits at the bottom of its Witt tower. Define $Y_{r}$ to be the $\epsilon$-Hermitian space formed by adjoining $r$-copies of the hyperbolic plane to $Y$. These $Y_{r}$'s form the Witt tower of $Y$. By the tower property \cite{Rallis-MR743016,Wu-irred-theta-MR3071813}, if the theta lift to $\bG(Y_{r})$ is non-zero then the theta lift to $\bG(Y_{r'})$ is also non-zero for all $r'\ge r$.

Define the first occurrence index of $\pi$ in the Witt tower of $Y$ to be 
\begin{equation}\label{eq:FO-defn}
  \FO_{X}^{Y,\chi}(\pi) :=
  \begin{cases}
    \min \left\{ \dim Y_{r} \middle| \Theta_{X,\chi_{2}}^{Y_{r},\chi}(\pi) \neq 0 \right\}, \quad &\text{if $\bG(X)=\rU(X)$};\\    
    \min \left\{ \dim Y_{r} \middle| \Theta_{X}^{Y_{r}}(\pi\otimes(\chi\circ\upsilon)) \neq 0 \right\}, \quad &\text{if $\bG(X)=\rO(X)$};\\    
    \min \left\{ \dim Y_{r} \middle| \Theta_{X}^{Y_{r}}(\pi) \neq 0 \right\}, \quad &\text{if $\bG(X)=\Sp(X)$ or $\Mp(X)$}.
  \end{cases}
\end{equation}
Note that it depends on $\chi$ but not on $\chi_{2}$ in the unitary case as changing $\chi_{2}$ to another compatible one produces only a character twist on $\Theta_{X,\chi_{2}}^{Y_{r},\chi}(\pi)$. For more details, see \cite[(1-1)]{wu22:_poles_eisen-unitary-PJM}. In the orthogonal case, we twist $\pi$ by $\chi\circ\upsilon$ where $\upsilon$ denotes the spinor norm. If $\bG(X)=\Sp(X)$ or $\Mp(X)$, we require that $\chi_{Y} = \chi$ where $\chi_{Y}$ is the quadratic automorphic character of $\GL_{1}(\AA)$ associated to $Y$ given by
\begin{equation*}
  \chi_Y (g) = (g,(-1)^{\dim Y (\dim Y -1)/2}\det\form
{\ } {\ }_Y),
\end{equation*}
where $(\ ,\ )$ is the Hilbert symbol.

Define the lowest occurrence index to be
\begin{equation}\label{eq:defn-LO}
  \LO_{X}^{\chi}(\pi) := \min \left\{ \FO_{X}^{Y,\chi}(\pi) \middle| \ \text{$Y$ is compatible with $\chi$}   \right\},
\end{equation}
when $\bG(X)=\rU(X), \Sp(X)$ or $\Mp(X)$. Here
compatibility means that
\begin{align}
 &\dim Y \equiv \epsilon_{\chi} \pmod 2, &\text{ if } \bG(X)=\rU(X);\nonumber\\ 
 &\chi_{Y} = \chi, &\text{ if } \bG(X)=\Sp(X) \text{ or } \Mp(X).\label{eq:theta-chi-Y-compatible-cond}
\end{align}
Define the lowest occurrence index to be
\begin{equation}\label{eq:LO-defn-O}
  \LO_{X}^{\chi}(\pi) := \min \left\{ \FO_{X}^{Y,\chi}(\pi\otimes \sgn_{T}) \middle| \  \text{$T$ a set of even number of places of $F$}  \right\},
\end{equation}
when $\bG(X)=\rO(X)$.

We have the following relations of the first occurrence (resp. the lowest occurrence) and the poles (resp. the maximal positive pole) of the Eisenstein series.  See  \cite[Corollary~3.9, Theorem~3.10]{JW-sympletic-MR3805648} for the symplectic case, \cite[Corollary~6.3, Theorem~6.4]{wu22:_period-metaplectic-JNT-in-press} for the metaplectic case, \cite[Corollary~3.5, Corollary~3.7]{JW-unitary-MR3435720} for the unitary case and \cite[Theorem~5.1, Theorem~1.3]{GJS-MR2540878} for the orthogonal case.

\begin{thm}\label{thm:FO-eis-pole}
  Let $\pi$ be an \ICARep{} of $\bG(X)(\AA)$. Let $\chi$ be a conjugate self-dual automorphic character of $\R_{E/F}\GL_{1}(\AA)$. Let $Y$ be an anisotropic $\epsilon$-Hermitian space that is compatible with $\chi$ in the sense of \eqref{eq:theta-chi-Y-compatible-cond}. Assume that $\FO_{X}^{Y,\chi}(\pi)=\dim Y +2r$.  Set
  \begin{equation}\label{eq:eis-pole-is-diff-of-sizes-of-two-groups}
    s_{0} =
    \begin{cases}
      \half(\dim X +1 - (\dim Y+2r)), \quad &\text{if $\bG(X)=\rU(X)$};\\
      \half(\dim X - (\dim Y+2r) ), \quad &\text{if $\bG(X)=\rO(X)$};\\
      \half(\dim X +2 - (\dim Y+2r) ), \quad &\text{if $\bG(X)=\Sp(X)$ or $\Mp(X)$}.
    \end{cases}
  \end{equation}
  Assume that $s_{0}\neq 0$. If $\bG(X)=\rO(X)$ and $s_{0}<0$, further assume that $\half\dim X <r<\dim X-2$.
  Then $s=s_{0}$ is a pole of the Eisenstein series $E_{\psi}^{Q_{1}}(g,f_{s})$ for some choice of $f_{s}\in \cA_{\psi}^{Q_{1}}(s,\chi\boxtimes\pi)$.
\end{thm}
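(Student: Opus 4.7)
The plan is to exploit the first occurrence assumption to produce a cuspidal automorphic form via theta lifting, and then to translate the non-vanishing of this theta lift into a non-trivial residue of the Eisenstein series $E^{Q_1}_\psi(g,f_s)$ at $s=s_0$, using a see-saw identity that links theta integrals to the Eisenstein construction on $\bG(X_1)$.

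\textbf{Step 1: Extract a cuspidal theta lift.} By the hypothesis $\FO^{Y,\chi}_X(\pi) = \dim Y + 2r$ and the tower property of \cite{Rallis-MR743016,Wu-irred-theta-MR3071813}, the lift $\Theta^{Y_r,\chi}_{X,\chi_2}(\pi)$ (resp.\ its orthogonal/metaplectic analogues) is non-zero and, because all smaller members of the Witt tower give vanishing lifts, it is cuspidal. Fix $f\in\pi$ and $\Phi\in\cS(W^+(\AA))$ with $\theta^{Y_r}_X(f,\Phi)\neq 0$.

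\textbf{Step 2: Set up the see-saw.} Consider the dual reductive pair $\bG(X_1)\times \bG(Y_r)$ sitting inside $\Sp$ together with the see-saw partner $\bG(X)\times \bG(Y_r+Y_r^-)$ (and its appropriate variant in each of the four cases). The Weil representation of the larger pair, restricted to the smaller one, produces the mixed-model formula that expresses the theta kernel on $\bG(X_1)\times\bG(Y_r)$ as an Eisenstein-like series in the variable on $\bG(X_1)$ with data coming from Schwartz functions on $\Hom(Y_r,X)$; this is the Rallis/Kudla partial-Iwasawa decomposition of $\omega$ under $Q_1$.

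\textbf{Step 3: Produce the pole from the see-saw identity.} Pair the non-zero cuspidal automorphic form $\theta^{Y_r}_X(f,\Phi)$ on $\bG(Y_r)$ against a matching vector and unfold the resulting period on $\bG(X_1)$. By the mixed-model computation of Step~2, the resulting integral is (the leading term of) a residue at $s=s_0$ of an integral involving the Eisenstein section $f_s\in\cA^{Q_1}_\psi(s,\chi\boxtimes\pi)$, with the shift $s_0=\tfrac12(d_{\check\bG}+1-(\dim Y+2r))$ dictated by the Iwasawa-decomposition exponent of $|\det|^{s+\rho_{Q_1}}$ on the $\GL_1$-factor versus the normalisation of $\rho_{Q_a}$; this produces precisely the four formulae in~\eqref{eq:eis-pole-is-diff-of-sizes-of-two-groups}. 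Since the theta integral is non-zero, the Eisenstein series must have a (non-trivial) pole or at least a non-vanishing residue at $s=s_0$, for a suitable choice of $f_s$.

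\textbf{Step 4: The negative $s_0$ case.} When $s_0>0$ the Eisenstein series converges or is meromorphic with controlled poles, and the argument is cleanest. When $s_0<0$, the see-saw identity only produces a genuine pole after applying a functional equation or an intertwining operator. In the orthogonal case this is the hardest point: the Siegel-Weil machinery used to match theta integrals with Eisenstein residues needs regularisation of Kudla–Rallis type, and this is why the hypothesis $\tfrac12\dim X<r<\dim X-2$ is imposed — it keeps us inside the convergent range of the regularised formula and away from the ``boundary'' cases where exchange of pole orders obstructs the conclusion. The main obstacle is therefore the careful bookkeeping of residues and the compatibility check between the seesaw identity and the functional equation; this is done case-by-case in \cite{GJS-MR2540878,JW-sympletic-MR3805648,JW-unitary-MR3435720,wu22:_period-metaplectic-JNT-in-press}, whose arguments we aggregate.
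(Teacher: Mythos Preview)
The paper does not give its own proof of this theorem; it merely records the statement and defers entirely to \cite{GJS-MR2540878}, \cite{JW-sympletic-MR3805648}, \cite{JW-unitary-MR3435720}, and \cite{wu22:_period-metaplectic-JNT-in-press} for the four cases. Your proposal is therefore not to be compared against a proof in the paper but against the arguments in those sources, and your closing sentence acknowledges as much.

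The overall architecture you describe---first occurrence yields a nonzero cuspidal theta lift $\sigma$ on $\bG(Y_r)$, which one then lifts back to $\bG(X_1)$ and identifies with (a residue of) $E^{Q_1}_\psi(\cdot,f_s)$ at $s=s_0$ via a mixed-model computation---is indeed the method of the cited papers. Two points are imprecise, however. First, the see-saw you write in Step~2, with partner ``$\bG(X)\times\bG(Y_r+Y_r^-)$'', is not the device used; the cited papers do not really invoke a see-saw identity here but rather compute the theta lift of $\sigma$ to $\bG(X_a)$ directly using the mixed model of the Weil representation coming from the decomposition $X_a=\ell_a^+\oplus X\oplus\ell_a^-$, and recognise the output (after Poisson summation and unfolding along $Q_a$) as an Eisenstein series with a section built from the Schwartz data on $Y_r\otimes\ell_a^+$ and the inner-product pairing with $\pi$. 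Second, your explanation in Step~4 for the orthogonal restriction $\tfrac12\dim X<r<\dim X-2$ is off: as the paper itself remarks immediately after the theorem, the condition is imposed not to stay in a convergent range of a regularised Siegel--Weil formula, but to avoid period integrals over orthogonal groups of split binary quadratic forms, a case the period computation in \cite{GJS-MR2540878} cannot handle.
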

\begin{rmk}
  Using the notation from Section~\ref{sec:glob-arth-param}. The quantity $s_{0}$ in \eqref{eq:eis-pole-is-diff-of-sizes-of-two-groups}   can be written uniformly as
  \begin{equation*}
    \half(d_{\bG(X)^\vee} - d_{\bG(Y_{r})^\vee} +1).
  \end{equation*}
\end{rmk}
\begin{rmk}
  Note that we always have  $r\le \dim X$. The extra condition when $\bG(X)=\rO(X)$ is to avoid treating period integrals over the orthogonal groups of split binary quadratic forms, as our methods cannot deal with the technicality.
  Theorem~\ref{thm:FO-eis-pole} allows negative $s_{0}$. It is possible to detect non-maximal poles and  negative poles of the Eisenstein series by the first occurrence indices.
\end{rmk}


\begin{thm}\label{thm:LO-equiv-eis-max-pole}
  Let $\pi$ be an \ICARep{}  of $\bG(X)(\AA)$.
Let $\chi$ be a conjugate self-dual automorphic character of $\R_{E/F}\GL_{1}(\AA)$.
Then the maximal positive pole of $E_{\psi}^{Q_{1}}(g,f_s)$ for $f_{s}$ running over $\cA_{\psi}^{Q_{1}}(s,\chi\boxtimes\pi)$ is at $s=s_{0} \in \RR$ if and only if
  \begin{equation}\label{eq:LO-in-terms-of-Eis-pole}
    \LO_{X}^{\chi}(\pi) =
    \begin{cases}
      \dim X+ 1 - 2s_{0},  \quad &\text{if $\bG(X)=\rU(X)$};\\
      \dim X - 2s_{0},  \quad &\text{if $\bG(X)=\rO(X)$};\\
      \dim X +2 - 2s_{0}, \quad &\text{if $\bG(X)=\Sp(X)$ or $\Mp(X)$}.
    \end{cases}
  \end{equation}
\end{thm}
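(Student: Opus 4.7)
The plan is to deduce the theorem directly from the two preceding results, namely Theorem~\ref{thm:max-pole-eis} (which pins down the shape of the maximal positive pole) and Theorem~\ref{thm:FO-eis-pole} (which manufactures poles from non-vanishing theta lifts). First I would observe that the formulas~\eqref{eq:eis-max-pole} for the maximal positive pole and~\eqref{eq:eis-pole-is-diff-of-sizes-of-two-groups} for the pole produced from a first occurrence have identical shape. Explicitly, writing $N_{X} = d_{\check{\bG}}+1$ for the case-dependent constant, the map $L \mapsto s_{0}(L):=\tfrac{1}{2}(N_{X}-L)$ is a strictly decreasing affine bijection between integer ``dimension'' parameters and half-integer pole locations, and~\eqref{eq:LO-in-terms-of-Eis-pole} is exactly the assertion $\LO_{X}^{\chi}(\pi)=s_{0}^{-1}(s_{0})$. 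Thus the $\Leftrightarrow$ reduces to the single identity ``the maximal positive pole equals $s_{0}(\LO_{X}^{\chi}(\pi))$''.

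For the existence of a pole at $s_{0}(\LO)$, I would select a compatible $Y$ realising the minimum in definition~\eqref{eq:defn-LO} (or, in the orthogonal case, a pair $(Y,T)$ realising the minimum in~\eqref{eq:LO-defn-O}, invoking Theorem~\ref{thm:FO-eis-pole} for $\pi\otimes\sgn_{T}$). Feeding $\FO_{X}^{Y,\chi}(\pi)=\LO_{X}^{\chi}(\pi)$ into Theorem~\ref{thm:FO-eis-pole} produces a pole of $E_{\psi}^{Q_{1}}(g,f_{s})$ at $s=s_{0}(\LO)$. Since $\LO \ge r_{X}$ (a non-trivial theta lift must occur at or above the first split level, and the Witt-tower set-up forces $\dim Y + 2r \ge r_{X}$ once a non-zero lift is detected) and $\LO<N_{X}$ by the hypothesis that a positive pole exists at all, the point $s_{0}(\LO)$ is strictly positive. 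Therefore $s_{\max} \ge s_{0}(\LO)$, which, via the bijection, reads $L^{\sharp}\le \LO$, where $L^{\sharp}:= s_{0}^{-1}(s_{\max})$ is the parameter of the maximal positive pole coming from Theorem~\ref{thm:max-pole-eis}.

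The reverse inequality $L^{\sharp}\ge \LO$ is the crux and must be pulled from the internals of Theorem~\ref{thm:max-pole-eis}. The content of that theorem, proved in \cite{GJS-MR2540878,JW-sympletic-MR3805648,wu22:_period-metaplectic-JNT-in-press,JW-unitary-MR3435720}, is precisely that the residue at $s_{\max}$ produces, through a tower argument and the Siegel--Weil identification of residues with theta integrals, a non-vanishing theta lift of $\pi$ to $\bG(Y_{r})$ for some compatible $Y$ with $\dim Y + 2r = L^{\sharp}$; in the orthogonal case, this is done after a suitable $\sgn_{T}$-twist. Consequently $\FO_{X}^{Y,\chi}(\pi)\le L^{\sharp}$ for the relevant $Y$ (or twisted $\pi$), whence $\LO_{X}^{\chi}(\pi)\le L^{\sharp}$. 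Combining with the previous paragraph yields $L^{\sharp}=\LO$, and the equivalence of~\eqref{eq:LO-in-terms-of-Eis-pole} with ``maximal positive pole at $s_{0}$'' follows from the affine bijection in both directions.

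The main obstacle is the second inequality $L^{\sharp}\ge \LO$: formally it is built into the Witt-tower arguments and Siegel--Weil residue computations of the cited papers, but to invoke it cleanly in the unified setting here one must check case-by-case that the residue identification works across $\rU(X)$, $\rO(X)$, $\Sp(X)$ and $\Mp(X)$. Two boundary issues require separate attention: the exclusion $s_{0}\ne 0$ in Theorem~\ref{thm:FO-eis-pole} (so one must verify $s_{0}(\LO)\ne 0$, or handle that degenerate case directly), and the restriction $\tfrac{1}{2}\dim X<r<\dim X-2$ in the orthogonal case with $s_{0}<0$, which is irrelevant here since the positive-pole side keeps $s_{0}(\LO)>0$.
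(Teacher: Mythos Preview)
The paper does not supply its own proof of this theorem; it simply records the statement and cites the case-by-case results \cite[Theorem~3.10]{JW-sympletic-MR3805648}, \cite[Theorem~6.4]{wu22:_period-metaplectic-JNT-in-press}, \cite[Corollary~3.7]{JW-unitary-MR3435720}, and \cite[Theorem~1.3]{GJS-MR2540878}. Your reconstruction is in fact how those cited proofs run: one inequality is exactly Theorem~\ref{thm:FO-eis-pole} applied at a $Y$ (or $(Y,T)$) realising the lowest occurrence, and the other is the residue--to--theta-lift step that sits inside the proof of Theorem~\ref{thm:max-pole-eis}. So your approach and the paper's (implicit) approach coincide.

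One logical wrinkle in your write-up: the sentence ``$\LO<N_{X}$ by the hypothesis that a positive pole exists at all'' is circular as placed, since at that point you have not yet used anything linking the pole to $\LO$. The clean order is to invoke the residue--to--theta-lift step \emph{first}: from the maximal positive pole at $s_{\max}$ the cited papers produce a non-vanishing theta lift in dimension $L^{\sharp}=N_{X}-2s_{\max}$, giving $\LO\le L^{\sharp}<N_{X}$ and hence $s_{0}(\LO)\ge s_{\max}>0$; only then does Theorem~\ref{thm:FO-eis-pole} apply (its hypothesis $s_{0}\ne 0$ is now satisfied) to give the reverse inequality $s_{0}(\LO)\le s_{\max}$. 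The remark about $\LO\ge r_{X}$ is not needed for this argument. In the orthogonal case you should also note why the Eisenstein series for $\pi$ and for $\pi\otimes\sgn_{T}$ share the same poles (they restrict identically to $\SO(X_{1})$), so that Theorem~\ref{thm:FO-eis-pole} for the twisted $\pi$ really yields a pole of $E^{Q_{1}}(g,f_{s})$ for the original $\pi$.
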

\begin{rmk}
  Theorem~\ref{thm:LO-equiv-eis-max-pole} does not allow negative $s_{0}$.
\end{rmk}



In Remark~\ref{rmk:eis-pole-rx-le}, we mentioned that the part $r_{X}\le ...$ in Theorem~\ref{thm:max-pole-eis} is proved by using theta correspondence. What we used is that we always have $ \LO_{X}^{\chi}(\pi)\ge r_{X}$ by the stable range condition \cite[Theorem~I.2.1]{Rallis-MR743016}.

\section{Application to Global Arthur Packets}
\label{sec:appl-glob-arth}

We have derived relations among $(\chi,b)$-factors of global $A$-parameters, poles of partial $L$-functions, poles of Eisenstein series and lowest occurrence indices of global theta lifts. Combining these, we have the following  implication on global $A$-packets.

\begin{thm}
  Let $\pi$ be an \ICARep{} of $\bG(X)(\AA)$. Let $\phi_\pi$ be its global $A$-parameter. Let $\chi$ be a conjugate self-dual automorphic character of $\R_{E/F}\GL_{1}(\AA)$.  Assume that $\phi_\pi$ has a $(\chi,b)$-factor for some positive integer $b$. Then 
  \begin{equation}\label{eq:bound-on-b}
    b \le
    \begin{cases}
      \dim X - r_X,  & \quad \text{if $\bG(X)=\rU(X)$};\\
      \dim X  - r_X -1, & \quad \text{if $\bG(X)=\rO(X)$};\\
      \dim X - r_{X} +1 = \half \dim X + 1, & \quad \text{if $\bG(X)=\Sp(X)$ or $\Mp(X)$}.
    \end{cases}
  \end{equation}
  where $r_{X}$ denotes the Witt index of $X$.
\end{thm}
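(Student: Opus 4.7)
The plan is to chain together the three main ingredients already assembled in the paper: detect the $(\chi,b)$-factor via a pole of a partial $L$-function (Proposition~\ref{prop:tau-b-L-pole}), transfer that to a pole of the Eisenstein series $E_\psi^{Q_1}(g,f_s)$ (Proposition~\ref{prop:L-pole-Eis-pole}), and then apply the upper bound on the maximal positive pole of this Eisenstein series supplied by Theorem~\ref{thm:max-pole-eis}. The role of the stable range constraint $r_X \le 2j+\epsilon_\chi$ (or its analogues) will be decisive: it is what converts the abstract location formula \eqref{eq:eis-max-pole} into the explicit bound \eqref{eq:bound-on-b}.

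First I would assume, without loss of generality, that $b$ is the largest integer for which $(\chi,b)$ occurs as a factor of $\phi_\pi$ (other $(\chi,*)$-factors would only make $b$ smaller, hence the bound trivially stronger). By the factorisation
\begin{equation*}
  L^S(s,\pi\times\chi^\vee)= \prod_{i=1}^r \prod_{j=0}^{b_i-1} L^S\bigl(s - \tfrac{b_i-1}{2}+j,\ \tau_i\times \chi^\vee\bigr),
\end{equation*}
used in the proof of Proposition~\ref{prop:tau-b-L-pole}, only the factors with $\tau_i=\chi$ can contribute poles, and among those the rightmost pole is at $s=\tfrac{b+1}{2}$. Next, Proposition~\ref{prop:L-pole-Eis-pole}(1) promotes this to a pole of $E_\psi^{Q_1}(g,f_s)$ at $s=\tfrac{b+1}{2}$. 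Since $b\ge 1$, this pole is positive, so it is bounded above by the maximal positive pole of $E_\psi^{Q_1}(g,f_s)$, whose location is controlled by Theorem~\ref{thm:max-pole-eis}.

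It then remains to translate Theorem~\ref{thm:max-pole-eis} into the claimed inequalities. The maximal positive pole has the shape \eqref{eq:eis-max-pole} with the integer $j$ subject to the constraint \eqref{eq:like-dimY}; in each case $j$ is minimised when the left-hand inequality is saturated, i.e.\ when $2j+\epsilon_\chi=r_X$ in the unitary case, $2j=r_X$ in the orthogonal and symplectic cases, and $2j+1=r_X$ in the metaplectic case. Substituting and using $\tfrac{b+1}{2}\le$ (upper bound on the maximal pole) yields, respectively, $b\le \dim X-r_X$, $b\le \dim X - r_X - 1$, $b\le \dim X - r_X + 1$, and $b\le \dim X - r_X + 1$. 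For the symplectic and metaplectic cases one finally uses $\dim X = 2 r_X$ to rewrite $\dim X - r_X + 1 = \tfrac{1}{2}\dim X + 1$, matching~\eqref{eq:bound-on-b}.

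The substantive content, rather than any step in this proof, is buried in Theorem~\ref{thm:max-pole-eis}; the inequality $r_X \le 2j+\epsilon_\chi$ (and variants) rests on the stable-range bound $\LO_X^\chi(\pi)\ge r_X$ recalled at the end of Section~\ref{sec:theta-correspondence}, whose proof uses the non-vanishing of theta lifts in the stable range. Once that input is granted, the argument above is essentially a short chain of implications, and I do not anticipate genuine obstacles beyond bookkeeping the four parallel cases.
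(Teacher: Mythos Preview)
Your approach matches the paper's almost exactly, but there is one small gap. You reduce to the case where $b$ is maximal among the $(\chi,*)$-factors only, and then assert that the rightmost pole of $L^S(s,\pi\times\chi^\vee)$ is at $s=\tfrac{b+1}{2}$. This inference is not quite justified: if $\phi_\pi$ has a factor $(\tau_i,b_i)$ with $\tau_i\ne\chi$ and $b_i>b$, then at $s=\tfrac{b+1}{2}$ the factor $L^S\bigl(s-\tfrac{b_i-1}{2}+j,\tau_i\times\chi^\vee\bigr)$ is evaluated at a point with real part $<1$ for small $j$, where the non-vanishing result you are implicitly invoking does not apply, and the simple pole coming from the $(\chi,b)$-factor could be cancelled. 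Proposition~\ref{prop:tau-b-L-pole}(1) itself only guarantees the pole when $b$ is maximal among \emph{all} factors, and Proposition~\ref{prop:L-pole-Eis-pole}(1) needs the rightmost pole as input.

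The paper closes this gap with a short preliminary case split: if $b$ is not maximal among all factors of $\phi_\pi$, then automatically $b<\tfrac{1}{2}d_{\check{\bG}}$, and one checks directly that this already implies~\eqref{eq:bound-on-b} in each case. Only after disposing of this case does one assume $b$ is maximal among all factors and run exactly the chain Proposition~\ref{prop:tau-b-L-pole} $\Rightarrow$ Proposition~\ref{prop:L-pole-Eis-pole} $\Rightarrow$ Theorem~\ref{thm:max-pole-eis} that you describe. With that one extra sentence added, your argument is complete and coincides with the paper's.
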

\begin{proof}
  If $b$ is not maximal among all factors $(\tau,b)$ appearing in $\phi_\pi$, then $b<\half d_{\bG(X)^\vee}$.
Then it is clear that $b$ satisfies \eqref{eq:bound-on-b}.
Now we assume that $b$ is maximal among all factors appearing in $\phi_\pi$.
By Prop.~\ref{prop:tau-b-L-pole}, $L^S(s,\pi\times \chi^{-1})$ has its rightmost pole at $s=\half(b+1)$.
Then by Prop.~\ref{prop:L-pole-Eis-pole}, $E_{\psi}^{Q_{1}}(g,f_s)$ has a pole at $s=\half(b+1)$ for some choice of $f_{s}$.
Assume that $s=\half(b_{1}+1)$ is the rightmost pole of the Eisenstein series with $b_{1}\ge b$.
By Thm.~\ref{thm:max-pole-eis},
  \begin{equation*}
    \half(b_{1}+1) \le
    \begin{cases}
      \half (\dim X + 1 - r_X),   & \quad \text{if $\bG(X)=\rU(X)$};\\
      \half (\dim X  - r_X), & \quad \text{if $\bG(X)=\rO(X)$};\\
      \half (\dim X + 2 - r_X), & \quad \text{if $\bG(X)=\Sp(X)$ or $\Mp(X)$}.
    \end{cases}
  \end{equation*}
  or in other words, $b_1$ is less than or equal to the quantity on the RHS of \eqref{eq:bound-on-b}.
  Using the fact that $b\le b_1$, we get the desired bound for $b$.
\end{proof}

\begin{rmk}
  Our result generalises \cite[Theorem~3.1]{Jiang-Liu-MR3969876} for symplectic groups to classical groups and metaplectic groups. In addition, we do not require the  assumption on the wave front set in \cite[Theorem~3.1]{Jiang-Liu-MR3969876}. 
  This type of result has been used in \cite[Section~5]{Jiang-Liu-MR3969876} to find a Ramanujan bound  which measures the departure of the local components of a cuspidal $\pi$ from being tempered.

  The metaplectic case has been treated in \cite[Theorem~0.1]{wu22:_period-metaplectic-JNT}, though the proof is not written down explicitly.
Here we supply the detailed arguments for all classical groups and metaplectic groups uniformly.
\end{rmk}

The corollary below follows immediately from the theorem.
\begin{cor}\label{cor:A-packet-no-cusp}
  The global $A$-packet $\Pi_\phi$ attached to the elliptic global $A$-parameter $\phi$ cannot have a cuspidal member if $\phi$ has a $(\chi,b)$-factor with
  \begin{equation*}
    b >
    \begin{cases}
      \dim_E X - r_X,  & \quad \text{if $\bG(X)=\rU(X)$};\\
      \dim_F X  - r_X -1, & \quad \text{if $\bG(X)=\rO(X)$};\\
      \dim X - r_{X} +1 = \half \dim X + 1, & \quad \text{if $\bG(X)=\Sp(X)$ or $\Mp(X)$}.
    \end{cases}
  \end{equation*}
\end{cor}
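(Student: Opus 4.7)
The plan is to argue by contrapositive, directly from the preceding theorem. Suppose, for contradiction, that the global $A$-packet $\Pi_\phi$ contains a cuspidal member $\pi$; in particular $\pi$ is an \ICARep{} of $\bG(X)(\AA)$ whose global $A$-parameter $\phi_\pi$ equals $\phi$. By hypothesis, $\phi_\pi$ then has a $(\chi,b)$-factor for a positive integer $b$ exceeding the bound stated in the corollary.

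Applying the preceding theorem to $\pi$ yields the inequality \eqref{eq:bound-on-b}, namely
\begin{equation*}
  b \le
  \begin{cases}
    \dim_E X - r_X,  & \quad \text{if $\bG(X)=\rU(X)$};\\
    \dim_F X  - r_X -1, & \quad \text{if $\bG(X)=\rO(X)$};\\
    \dim X - r_{X} +1 = \half \dim X + 1, & \quad \text{if $\bG(X)=\Sp(X)$ or $\Mp(X)$},
  \end{cases}
\end{equation*}
which directly contradicts the assumption that $b$ strictly exceeds this quantity. Therefore no cuspidal member can exist in $\Pi_\phi$.

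There is essentially no obstacle here: the content lies entirely in the preceding theorem, whose proof chains together Proposition~\ref{prop:tau-b-L-pole} (maximal $(\chi,b)$-factors produce rightmost $L$-poles at $s=(b+1)/2$), Proposition~\ref{prop:L-pole-Eis-pole} (poles of $L^S(s,\pi\times\chi^\vee)$ transfer to poles of $E_\psi^{Q_1}(g,f_s)$), and Theorem~\ref{thm:max-pole-eis} (the maximal positive pole of the Eisenstein series is constrained by the Witt index $r_X$, ultimately because $\LO_X^\chi(\pi)\ge r_X$ by the stable range). The only subtlety worth flagging is that the theorem applies uniformly whether $b$ is maximal among all factors of $\phi_\pi$ or not, since in the non-maximal case $b < \half d_{\bG(X)^\vee}$ which already implies the bound. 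Hence the corollary follows as a one-line consequence.
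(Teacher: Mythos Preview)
Your argument is correct and matches the paper's approach: the paper simply states that the corollary follows immediately from the preceding theorem, and your contrapositive application of that theorem is exactly the intended one-line deduction.
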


\section{Generic Global $A$-packets}
\label{sec:gener-repr}

Following the terminology of \cite{Arthur-book-MR3135650}, we say that an elliptic global $A$-parameter is generic if it is of the form $\phi = \boxplus_{i=1}^{r} (\tau_{i},1)$ and we say a global $A$-packet is generic if its global $A$-parameter is generic. 
Assume that $\pi$ is a cuspidal member in a generic  global $A$-packet.
Then our results can be made more precise.
We note that our results for $\Mp(X)$ are conditional on results on normalised intertwining operators.
See Assumption~\ref{ass:normalised-intertwining-op} and Remark~\ref{rmk:metaplectic-conditional}.

First assume that $G(X)$ is quasi-split and that $\pi$ is globally generic.
 We explain what we mean by globally generic.
We use the same set-up as in \cite[Section~3]{Shahidi-MR942520}.
Let $B$ be a Borel subgroup of $G(X)$.
Let $N$ denote its unipotent radical and let $T$ be a fixed choice of   Levi subgroup of $B$.
Of course, in this case $T$ is a maximal torus of $G(X)$.
We require that $T$ is maximally split.
Let $\bar{F}$ denote an algebraic closure of $F$.
Let $\Delta$ denote the set of simple roots of $T(\bar{F})$ in $N(\bar{F})$.
Let $\{X_{\alpha}\}_{\alpha\in\Delta}$ be a $\Gal(\bar{F}/F)$-invariant set of root vectors. 
Recall that $\psi$ is a fixed non-trivial automorphic  character of $\AA_{F}$ which is used in the definitions of the Weil representation and the global $A$-packets for $\Mp(X)$.
It gives rise to generic characters of $N(\AA)$.
We use the one defined as follows.
For each place $v$ of $F$, we define a character $ \psi_{N,v} $ of $N(F_{v})$.
Write an element of $N(F_{v})$ as $\prod_{\alpha\in\Delta} \exp(x_{\alpha}X_{\alpha})$ for  $x_{\alpha}\in \bar{F}_{v}$ such that $\sigma x_{\alpha} = x_{\sigma\alpha}$ with $\sigma\in\Gal(\bar{F}/F)$.
Set
\begin{align*}
  \psi_{N,v}(\prod_{\alpha\in\Delta} \exp(x_{\alpha}X_{\alpha})) = \psi_{v} (\sum_{\alpha\in \Delta} x_{\alpha}).
\end{align*}
Let  $\psi_{N} = \otimes_{v}\psi_{N,v}$.
In the $\Mp(X)$ case, we view $N(\AA)$ as a subgroup of $\Mp(X)(\AA)$ via the canonical splitting.
We require  that $\pi$ is globally generic with respect to the generic character $\psi_{N}$ of $N(\AA)$.
Thus the notion of global genericity depends on the choice of the generic automorphic character of $N(\AA)$.
However by  \cite[Appendix~A]{CKPSS-MR2075885}, the choice has no effect on the $L$-factors, the $\varepsilon$-factors  and the global $A$-parameter  for $\pi$ in the case of $\bG(X)=\Sp(X), \rO(X), \rU(X)$.
The case of $\Mp(X)$ is highly dependent on the choice.

When $\pi$ is globally generic,  $b=1$ for every factor $(\tau,b)$ in the global $A$-parameter $\phi_{\pi}$.
This is because  the Langlands functorial lift of $\pi$ is an isobaric sum of conjugate self-dual cuspidal representations of some $\RGL_{n}(\AA)$. See Theorem~11.2 of \cite{GRS-descent-book-MR2848523}.

By \cite{JLZ-MR3079762},  there is a more precise relation on the poles of $L$-functions and the poles of Eisenstein series.
The set of possible poles of the normalised Eisenstein series is determined by the complete $L$-function $L(s,\pi\times\chi^{\vee})$.
From the assumption that $\pi$ is globally generic,  in the right half plane, $L(s,\pi\times\chi^{\vee})$ has at most a simple pole at $s=1$.
In fact we only need \cite[Proposition~4.1]{JLZ-MR3079762} rather than the full strength of \cite[Theorem~1.2]{JLZ-MR3079762} which allows the induction datum to be a Speh representation on the general linear group factor of the Levi.
By \cite[Theorem~5.1]{JZ-automorphic-modules-MR4088351}, \cite[Proposition~4.1]{JLZ-MR3079762} can be strengthened to include the case where $\pi$ is a cuspidal member in a generic global $A$-packet of $\bG(X)(\AA)$ where $\bG(X) = \Sp(X), \rO(X),\rU(X)$ does not have to be quasi-split. 
We rephrase \cite[Proposition~4.1]{JLZ-MR3079762} in our context as Theorem~\ref{thm:generic-param-L-pole-Eis-pole}.

First we set up some notation and outline the method for extending \cite[Proposition~4.1]{JLZ-MR3079762}  to the case of  $\Mp(X)$.
Let
\begin{equation}\label{eq:rho+}
  \rho^{+}:=
  \begin{cases}
    \Asai^{\eta} \text{, where $\eta=(-1)^{\dim X+1}$},\quad & \begin{array}{l}
                                                                \text{if $\bG(X)=\rU(X)$,}
                                                              \end{array}\\
    \wedge^{2},  \quad & \begin{array}{l}
                          \text{if $\bG(X)=\rO(X)$ with $\dim X$ odd} \\
                          \text{ or if $\bG(X)=\Mp(X)$};
                        \end{array}\\
    \Sym^{2},  \quad & \begin{array}{l}
                         \text{if $\bG(X)=\rO(X)$ with $\dim X$ even}\\
                         \text{ or if $\bG(X)=\Sp(X)$}
                       \end{array}
  \end{cases}
\end{equation}
and
\begin{equation}\label{eq:rho-}
  \rho^{-}:=
  \begin{cases}
    \Asai^{-\eta} \text{, where $\eta=(-1)^{\dim X+1}$},\quad &\begin{array}{l}
                                                                \text{if $\bG(X)=\rU(X)$,}
                                                              \end{array}\\
    \Sym^{2}, \quad &\begin{array}{l}
                          \text{if $\bG(X)=\rO(X)$ with $\dim X$ odd} \\
                          \text{ or if $\bG(X)=\Mp(X)$};
                        \end{array}\\
    \wedge^{2},  \quad &\begin{array}{l}
                         \text{if $\bG(X)=\rO(X)$ with $\dim X$ even}\\
                         \text{ or if $\bG(X)=\Sp(X)$}.
                       \end{array}
  \end{cases}
\end{equation}

  The  results of \cite{JLZ-MR3079762} do not cover the metaplectic case, but the method should generalise without difficulty.
  We explain the strategy.
  First the poles of the Eisenstein series are related to those of the intertwining operators
\begin{equation*}
  M(w_{0}, \tau|\ |^{s}\boxtimes \pi):    \Ind_{Q_{a}(\AA)}^{\bG(X_{a})(\AA)}(\tau|\ |^{s}\boxtimes\pi) \rightarrow   \Ind_{Q_{a}(\AA)}^{\bG(X_{1})(\AA)}(\tau|\ |^{-s}\boxtimes\pi)
\end{equation*}
where $\tau$ is a conjugate self-dual cuspidal automorphic representation of $\GL_{a} (\AA_E)$  
and $w_{0}$ is the longest Weyl element in $Q_{a} \lmod G(X_{a}) / Q_{a}$.
Then define  the normalised intertwining operator 
\begin{align}\label{eq:normalised-intertwining-op}
  N(w_{0}, \tau|\ |^{s}\boxtimes \pi) :=  &\frac{L(s,\pi\times \tau^{\vee}) L(2s,\tau, \rho^{-})}{   L(s+1,\pi\times \tau^{\vee}) L(2s+1,\tau, \rho^{-}) \varepsilon(s,\pi\times \tau^{\vee}) \varepsilon(2s,\tau, \rho^{-})} \nonumber \\ 
&\quad  \cdot M(w_{0}, \tau|\ |^{s}\boxtimes \pi). 
\end{align}
    The proof of \cite[Proposition~4.1]{JLZ-MR3079762} relies on 
    the   key result that the normalised intertwining operator 
    is holomorphic and non-zero for $\Re s \ge \half$. 
 Then it boils down to finding the poles of the normalising factors or equivalently
 \begin{align*}
   \frac{L(s,\pi\times \tau^{\vee}) L(2s,\tau, \rho^{-})}{   L(s+1,\pi\times \tau^{\vee}) L(2s+1,\tau, \rho^{-})}.
 \end{align*}
  Once we have the key result available, we expect to have a version of \cite[Proposition~4.1]{JLZ-MR3079762} for the metaplectic groups.
 Note that our $\rho^{\pm}$ defined in~\eqref{eq:rho+} and~\eqref{eq:rho-} is different from the $\rho$ and $\rho^{-}$ in  \cite{JLZ-MR3079762}.
 
Then by using an inductive formula, we expect to be able to  prove \cite[Theorem~1.2]{JLZ-MR3079762} as well.
We hope to supply the details in a future work.


Next we allow $G(X)$ to be non-quasi-split. We assume that $\pi$ is a cuspidal member in a generic global $A$-packet of $\bG(X)$.
Then by \cite[Theorem~5.1]{JZ-automorphic-modules-MR4088351}, \eqref{eq:normalised-intertwining-op} is holomorphic and non-zero for $\Re s\ge\half$ when $\bG(X) = \Sp(X), \rO(X),\rU(X)$.
Then the proof of \cite[Proposition~4.1]{JLZ-MR3079762} goes through verbatim for such $\pi$.
The proof of \cite[Theorem~5.1]{JZ-automorphic-modules-MR4088351} does not generalise readily to the case of $\Mp(X)$ as
the relevant results for $\Mp(X)$ are not available.

Thus we make an assumption on the normalised intertwining operator:
\begin{ass}\label{ass:normalised-intertwining-op}
  The normalised intertwining operator $N(w_{0}, \chi|\ |^{s}\boxtimes \pi )$ is holomorphic and non-zero for $\Re s \ge \half$.
\end{ass}
\begin{rmk}\label{rmk:metaplectic-conditional}
  This is shown to be true by \cite[Theorem~5.1]{JZ-automorphic-modules-MR4088351} when $\pi$ is a cuspidal member in a generic global $A$-packet of $\bG(X)(\AA)$ for $\bG(X) = \Sp(X), \rO(X),\rU(X)$.
  Thus this is only a condition when $\bG(X)=\Mp(X)$.
\end{rmk}



\begin{thm}\label{thm:generic-param-L-pole-Eis-pole}
  Assume Assumption~\ref{ass:normalised-intertwining-op}.
  Let $\pi$ be a cuspidal member in a generic global $A$-packet of $\bG(X)(\AA)$. Let $\chi$ be a conjugate self-dual automorphic character of $\R_{E/F}\GL_{1}(\AA)$.
  \begin{enumerate}
  \item Assume $\bG(X)=\rU(X)$ with $\epsilon_{\chi}\not\equiv \dim X \pmod 2$, $\rO(X)$ with $\dim X$ even or $\Sp(X)$. Then $L(s,\pi\times\chi^{\vee})$ has a pole at $s=1$ if and only if $E^{Q_{1}}(g,f_{s})$ has a pole at $s=1$ and it is its maximal pole.
  \item Assume $\bG(X)=\rU(X)$ with $\epsilon_{\chi}\equiv \dim X \pmod 2$, $\rO(X)$ with $\dim X$ odd or $\Mp(X)$. Then   $L(s,\pi\times\chi^{\vee})$ is non-vanishing at  $s=\half$ if and only if $E_{\psi}^{Q_{1}}(g,f_{s})$ has a pole at $s=\half$ and it is its maximal pole.
  \end{enumerate}
\end{thm}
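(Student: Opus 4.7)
The plan is to follow the strategy used in \cite{JLZ-MR3079762, JZ-automorphic-modules-MR4088351}: convert the pole analysis of $E^{Q_{1}}_{\psi}(g,f_{s})$ on the half-plane $\Re s\ge\half$ into a pole analysis of the normalising factor that relates $M$ to $N$, and then read off each case from the explicit form of $\rho^{-}$ in \eqref{eq:rho-} together with the parity constraints on $\chi$.

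First I would note that on $\Re s>0$ the poles of $E^{Q_{1}}_{\psi}(g,f_{s})$, as $f_{s}$ varies, coincide with the poles of the intertwining operator $M(w_{0},\chi|\ |^{s}\boxtimes\pi)$. By Assumption~\ref{ass:normalised-intertwining-op}, $N(w_{0},\chi|\ |^{s}\boxtimes\pi)$ is holomorphic and non-vanishing on $\Re s\ge\half$; since the $\varepsilon$-factors in \eqref{eq:normalised-intertwining-op} are entire and non-vanishing, this reduces the problem to locating the poles of
\begin{equation*}
\frac{L(s,\pi\times\chi^{\vee})\, L(2s,\chi,\rho^{-})}{L(s+1,\pi\times\chi^{\vee})\, L(2s+1,\chi,\rho^{-})}
\end{equation*}
on $\Re s\ge\half$. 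On this region the denominator is in the range of absolute convergence for Rankin--Selberg and standard $L$-functions on $\GL$, hence holomorphic and non-vanishing. Thus the poles of $E^{Q_{1}}_{\psi}(g,f_{s})$ on $\Re s\ge\half$ are exactly the poles of $L(s,\pi\times\chi^{\vee})\,L(2s,\chi,\rho^{-})$.

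Next I would carry out the case analysis. In Part~(1) either $\rho^{-}=\wedge^{2}$ (for $\Sp(X)$ and for $\rO(X)$ of even dimension), so that $L(2s,\chi,\wedge^{2})\equiv 1$, or $\rho^{-}=\Asai^{-\eta}$ with $\epsilon_{\chi}\not\equiv\dim X\pmod{2}$; the standard pole criterion for Asai $L$-functions of characters then forces $L(2s,\chi,\rho^{-})$ to be holomorphic at $s=\half$ and at $s=1$. Any pole of $E^{Q_{1}}_{\psi}$ on $\Re s\ge\half$ must therefore come from $L(s,\pi\times\chi^{\vee})$, which, by the isobaric description of the functorial lift of $\pi$, is holomorphic for $\Re s>1$ and has at most a simple pole at $s=1$; this yields the equivalence and maximality in Part~(1). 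In Part~(2) the matching parity makes $L(2s,\chi,\rho^{-})$ acquire a simple pole at $s=\half$: for $\rO(X)$ odd and for $\Mp(X)$, $\rho^{-}=\Sym^{2}$ and $L(2s,\chi,\Sym^{2})=L(2s,\chi^{2})=\zeta_{F}(2s)$ since $\chi^{2}=1$; in the unitary subcase, $\epsilon_{\chi}\equiv\dim X\pmod{2}$ makes $L(2s,\chi,\Asai^{-\eta})$ have a simple pole at $s=\half$. A simple zero of $L(s,\pi\times\chi^{\vee})$ at $s=\half$ would cancel this simple pole while a non-zero value would not, so a pole of $E^{Q_{1}}_{\psi}$ at $s=\half$ is equivalent to non-vanishing of $L(s,\pi\times\chi^{\vee})$ at $s=\half$.

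The main obstacle I expect is the maximality in Part~(2), which requires ruling out any pole of $E^{Q_{1}}_{\psi}$ strictly to the right of $s=\half$. The only remaining candidate is a pole at $s=1$ coming from $L(s,\pi\times\chi^{\vee})$, which for $\pi$ in a generic global $A$-packet would force $(\chi,1)$ to be a simple factor of $\phi_{\pi}$. The parity constraint from Section~\ref{sec:glob-arth-param} then requires $\chi$ to be conjugate self-dual of parity $(-1)^{N_{\check{\bG}}+1}$. For $\Mp(X)$ and for $\rO(X)$ odd, $N_{\check{\bG}}=0$ so the demanded parity is $-1$, i.e.\ $\chi$ would have to be symplectic, which is impossible for a character of $\GL_{1}$ since $\wedge^{2}$ of the standard representation of $\GL_{1}(\CC)$ is trivial; for the unitary Part~(2) subcase a direct match of the required parity $(-1)^{\dim X+1}$ against the parity encoded by $\epsilon_{\chi}$ yields a contradiction. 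Once Assumption~\ref{ass:normalised-intertwining-op} is granted in the metaplectic setting, this parity bookkeeping is the only ingredient that needs care beyond the machinery of \cite{JLZ-MR3079762, JZ-automorphic-modules-MR4088351}.
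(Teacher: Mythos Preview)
Your proposal is correct and follows essentially the same approach as the paper. The paper does not give a self-contained proof of this theorem; it presents it as a rephrasing of \cite[Proposition~4.1]{JLZ-MR3079762}, extended to non-quasi-split groups via \cite[Theorem~5.1]{JZ-automorphic-modules-MR4088351}, and it outlines precisely the method you use: reduce the poles of $E_{\psi}^{Q_{1}}$ on $\Re s\ge\half$ to those of the unnormalised intertwining operator, invoke Assumption~\ref{ass:normalised-intertwining-op} to pass to the normalising factor $\frac{L(s,\pi\times\chi^{\vee})L(2s,\chi,\rho^{-})}{L(s+1,\pi\times\chi^{\vee})L(2s+1,\chi,\rho^{-})}$, and then analyse the numerator. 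Your case analysis of $\rho^{-}$ and of the parity obstruction ruling out a pole at $s=1$ in Part~(2) is exactly what is needed to make the paper's sketch into a proof; the paper leaves these computations to the cited references.

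One small expository point: in Part~(1) you say $L(2s,\chi,\rho^{-})$ is ``holomorphic at $s=\half$ and at $s=1$'', but what you actually use (and what is true) is that it is holomorphic on all of $\Re s\ge\half$. Since in each subcase this function is either identically~$1$ or the Hecke $L$-function $L(2s,\varepsilon_{E/F})$, which is entire, this is immediate; it would be cleaner to state it that way.
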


\begin{rmk}
  The result of \cite{JLZ-MR3079762} involves normalised Eisenstein series, but the normalisation has no impact on the positive poles. The following remarks use the notation in \cite{JLZ-MR3079762}. We only need the case $b=1$ in \cite{JLZ-MR3079762} which is Proposition~4.1 there. Furthermore we only apply it in the case where $\tau$ is a character. The condition that $L(s,\tau,\rho)$ has a pole at $s=1$ is automatically satisfied by the requirement on our $\chi$ that it is conjugate self-dual of parity $(-1)^{N_{\bG(X)^{\vee}}+1}$. See Section~\ref{sec:glob-arth-param}, especially Remark~\ref{rmk:conj-self-dual-and-L-fun-characterisation}.
\end{rmk}


The global $A$-parameter $\phi_{\pi}$ can possibly have a $(\chi,1)$-factor only when $\chi$ satisfies the condition that $L(s,\chi,\rho^{+})$ has a pole at $s=1$. 
Due to the parity condition on factors of an elliptic global $A$-parameter, in some cases, $\phi_{\pi}$ cannot have a $(\chi,1)$-factor.

Combining with our result (Theorem~\ref{thm:LO-equiv-eis-max-pole}) on poles of Eisenstein series and lowest occurrence indices with Theorem~\ref{thm:generic-param-L-pole-Eis-pole} which gives a precise relation between poles of the complete $L$-function and those of the Eisenstein series, we get
\begin{thm}
  Assume Assumption~\ref{ass:normalised-intertwining-op}.
  Let $\pi$ be a cuspidal member in a generic global $A$-packet of $\bG(X)(\AA)$. Let $\chi$ be a conjugate self-dual automorphic character of $\R_{E/F}\GL_{1}(\AA)$. In each of the following statements, we consider only those $\bG(X)$'s that are listed.
  \begin{enumerate}
  \item \label{item:L1-pole} Assume that $L(s,\pi\times\chi^{\vee})$ has a pole at $s=1$. Then 
    \begin{equation*}
      \LO_{X}^{\chi}(\pi) =
        \begin{cases}
      \dim X - 1, \quad &\text{if $\bG(X)=\rU(X)$ and $\epsilon_{\chi}\not\equiv \dim X \pmod 2$};\\
      \dim X - 2,  \quad &\text{if $\bG(X)=\rO(X)$ with $\dim X$ even};\\
      \dim X,  \quad &\text{if $\bG(X)=\Sp(X)$}.
      \end{cases}
    \end{equation*}
  \item \label{item:L1-no-pole} Assume that $L(s,\pi\times\chi^{\vee})$ does not have a pole at $s=1$. Then 
    \begin{equation*}
      \LO_{X}^{\chi}(\pi) \ge
      \begin{cases}
        \dim X + 1, \quad &\text{if $\bG(X)=\rU(X)$ and $\epsilon_{\chi}\not\equiv \dim X \pmod 2$};\\
        \dim X,   \quad &\text{if $\bG(X)=\rO(X)$ with $\dim X$ even};\\
        \dim X +2, \quad &\text{if $\bG(X)=\Sp(X)$}.
      \end{cases}
    \end{equation*}
  \item\label{item:Lhalf-non-van}
    Assume $L(\half,\pi\times\chi^{\vee})\neq 0$. Then 
    \begin{equation*}
      \LO_{X}^{\chi}(\pi) =
        \begin{cases}
      \dim X,  \quad &\text{if $\bG(X)=\rU(X)$ and $\epsilon_{\chi}\equiv \dim X \pmod 2$};\\
      \dim X - 1,  \quad &\text{if $\bG(X)=\rO(X)$ with $\dim X$ odd};\\
      \dim X +1,  \quad &\text{if $\bG(X)=\Mp(X)$}.
      \end{cases}
    \end{equation*}
    \item \label{item:Lhalf-van} 
    Assume $L(\half,\pi\times\chi^{\vee})=0$. Then 
    \begin{equation*}
      \LO_{X}^{\chi}(\pi) \ge
        \begin{cases}
      \dim X+2,  \quad &\text{if $\bG(X)=\rU(X)$ and $\epsilon_{\chi}\equiv \dim X \pmod 2$};\\
      \dim X + 1,  \quad &\text{if $\bG(X)=\rO(X)$ with $\dim X$ odd};\\
      \dim X +3,  \quad &\text{if $\bG(X)=\Mp(X)$}.
      \end{cases}
    \end{equation*}
  \end{enumerate}
\end{thm}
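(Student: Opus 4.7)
My plan is to chain together Theorem~\ref{thm:generic-param-L-pole-Eis-pole} and Theorem~\ref{thm:LO-equiv-eis-max-pole}: the former converts the hypothesis on $L(s,\pi\times\chi^{\vee})$ at $s=1$ or $s=\half$ into a statement about whether the maximal positive pole of $E_{\psi}^{Q_{1}}(g,f_{s})$ sits at that value, and the latter converts the location of that maximal positive pole into the required numerical identity for $\LO_{X}^{\chi}(\pi)$.

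For items~\eqref{item:L1-pole} and~\eqref{item:Lhalf-non-van} the argument should be immediate. Theorem~\ref{thm:generic-param-L-pole-Eis-pole}, in the case matching the listed $\bG(X)$'s, identifies the $L$-function hypothesis with $s_{0}=1$ (respectively $s_{0}=\half$) being the maximal positive pole of $E_{\psi}^{Q_{1}}$, and substituting this value into~\eqref{eq:LO-in-terms-of-Eis-pole} reproduces each of the three stated equalities; the only bookkeeping is to match each $\bG(X)$ to the appropriate branch of the formula.

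For items~\eqref{item:L1-no-pole} and~\eqref{item:Lhalf-van} I plan to show that $E_{\psi}^{Q_{1}}(g,f_{s})$ has \emph{no} positive pole at all in the relevant half-plane. The preparatory input is that, since $\pi$ lies in a generic global $A$-packet, every simple factor of $\phi_{\pi}$ is of the form $(\tau_{i},1)$; Proposition~\ref{prop:tau-b-L-pole} then rules out any pole of $L^{S}(s,\pi\times\chi^{\vee})$ on $\Re(s)>1$, and promoting to the complete $L$-function (the missing local factors are non-vanishing and holomorphic at the relevant places) gives holomorphy of $L(s,\pi\times\chi^{\vee})$ on $\Re(s)>1$. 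Combining this with the hypothesis of item~\eqref{item:L1-no-pole} (resp.\ \eqref{item:Lhalf-van}) and feeding the result into the normalising quotient in~\eqref{eq:normalised-intertwining-op}, while using Assumption~\ref{ass:normalised-intertwining-op} to keep the normalised intertwining operator holomorphic and non-zero on $\Re(s)\ge\half$, shows that $E_{\psi}^{Q_{1}}$ has no pole on $\Re(s)\ge 1$ (resp.\ $\Re(s)\ge\half$). I will then inspect the discrete ladder of admissible maximal positive poles supplied by Theorem~\ref{thm:max-pole-eis}: in each of the three $\bG(X)$'s in item~\eqref{item:L1-no-pole} this ladder consists of positive integers with minimum $1$, so excluding $s=1$ leaves no admissible positive location at all; the contrapositive of Theorem~\ref{thm:LO-equiv-eis-max-pole} applied with a ``virtual'' $s_{0}\le 0$ then returns the claimed lower bound on $\LO_{X}^{\chi}(\pi)$. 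For item~\eqref{item:Lhalf-van} the ladder consists instead of positive half-integers and the same parity argument forces the next admissible location below $\half$ to be $-\half$, which is why the bound gains an extra unit compared to item~\eqref{item:L1-no-pole}.

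The main obstacle will be items~\eqref{item:L1-no-pole} and~\eqref{item:Lhalf-van}, where Theorem~\ref{thm:LO-equiv-eis-max-pole} has to be invoked in a contrapositive form with a virtual $s_{0}\le 0$ that does not correspond to an actual positive pole of $E_{\psi}^{Q_{1}}$. Reconciling this with the parity of the admissible pole locations in Theorem~\ref{thm:max-pole-eis} is what pins down the exact right-hand sides of the inequalities. A secondary technical point will be cleanly justifying the passage from partial to complete $L$-functions so that the conclusion feeds correctly into Theorem~\ref{thm:generic-param-L-pole-Eis-pole} as stated.
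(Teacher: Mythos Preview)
Your proposal is correct and follows the same route as the paper: the paper states the theorem simply as the combination of Theorem~\ref{thm:generic-param-L-pole-Eis-pole} and Theorem~\ref{thm:LO-equiv-eis-max-pole}, and your plan unpacks exactly that, supplying the details the paper leaves implicit for items~\eqref{item:L1-no-pole} and~\eqref{item:Lhalf-van} (namely, that in the generic case the normalising-factor argument forces $E_{\psi}^{Q_{1}}$ to have no positive pole once the $L$-hypothesis fails, and then the contrapositive of Theorem~\ref{thm:LO-equiv-eis-max-pole} yields the lower bound).

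One small stylistic comment: the ``virtual $s_{0}\le 0$'' framing is unnecessary. The clean way to extract the inequalities is to note that Theorem~\ref{thm:LO-equiv-eis-max-pole} is an equivalence, so if $E_{\psi}^{Q_{1}}$ has no positive pole then $\LO_{X}^{\chi}(\pi)\neq C-2s_{0}$ for every $s_{0}>0$ (with $C=\dim X+1,\ \dim X,\ \dim X+2$ according to the case), whence $\LO_{X}^{\chi}(\pi)\ge C$. The extra unit in item~\eqref{item:Lhalf-van} then comes not from a pole location at $-\tfrac12$ but simply from the parity of $\LO_{X}^{\chi}(\pi)$: it is the dimension of some $Y$ compatible with $\chi$, hence congruent to $\epsilon_{\chi}$, even, or odd (for $\rU(X)$, $\rO(X)$ with $\dim X$ odd, $\Mp(X)$ respectively), and in each case $C$ has the opposite parity, forcing $\LO_{X}^{\chi}(\pi)\ge C+1$.
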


\begin{rmk}
  By the conservation relation for local theta correspondence \cite{Sun-Zhu-MR3369906}, there always exists an $\epsilon$-Hermitian space $Z_{[v]}$ over $E_{v}$ of dimension given by the RHS of the equalities in items~\eqref{item:L1-pole},~\eqref{item:Lhalf-non-van}  such that the local theta lift of $\pi_{v}$  to $\bG(Z_{[v]})$ is non-vanishing.
  Thus in the case of items~\eqref{item:L1-no-pole},~\eqref{item:Lhalf-van} and $\bG(X)\neq \rO(X)$,  the collection $\{Z_{[v]}\}_{v}$ for $v$ running over all places of $F$ is always incoherent, i.e., there does not exist  an $\epsilon$-Hermitian space $Z$ over $E$ such that the localisation $Z_{v}$ is isomorphic to $Z_{[v]}$ for all $v$.
  In the case of items~\eqref{item:L1-no-pole},~\eqref{item:Lhalf-van} and $\bG(X)= \rO(X)$, we have a non-trivial theta lift of $\pi_{v}\otimes(\chi_{v}\circ\upsilon_{v})\otimes (\eta_{[v]}\circ\det)$ to $\bG(Z_{[v]})$ for $\eta_{[v]}$ being the trivial character or the sign character for each place $v$ of $F$, but the collection $\{\eta_{[v]}\}_{v}$ is incoherent, i.e., there does not exist an automorphic character $\eta$ of $\AA_{F}^{\times}$ such that the localisation $\eta_{v}$ is equal to $\eta_{[v]}$ for all $v$.
See the definitions of first occurrence \eqref{eq:FO-defn} and lowest occurrence \eqref{eq:LO-defn-O} for $\rO(X)$ for why we have a $(\chi_{v}\circ\upsilon_{v})$-twist.
  We also note that when $\pi$ is an \ICARep{} and $L(\half,\pi\times\chi^{\vee})=0$, it is conjectured that there is an arithmetic version of the Rallis inner product formula which says that the conjectural Beilinson--Bloch height pairing of arithmetic theta lifts (which are cycles on Shimura varieties constructed from an incoherent collection of $\epsilon$-Hermitian spaces) gives the derivative $L'(\half,\pi\times\chi^{\vee})$ up to some ramified factors and some abelian $L$-functions.
  The low rank cases have been proved in \cite{Kudla-Rapoport-Yang-book-MR2220359} and \cite{Yifeng-Liu-MR2928563,Yifeng-Liu-MR2928564}.
  More recently, the  cases of unitary groups of higher rank have been proved in \cite{Li-Liu-L-derivative-I-MR4334978} and \cite{Li-Liu-L-derivative-II-MR4390300}, conditional on hypothesis of the modularity of Kudla’s generating
functions of special cycles.
\end{rmk}

In terms of  `$(\chi,b)$'-factors, we have
\begin{thm}\label{thm:chi-b-generic}
  Let  $\bG(X)=\rU(X)$ with $\epsilon_{\chi}\not\equiv \dim X \pmod 2$, $\bG(X)=\rO(X)$ with $\dim X$ even or $\bG(X)=\Sp(X)$. Let $\pi$ be a cuspidal member in a generic global $A$-packet of $\bG(X)(\AA)$. Let $\chi$ be a conjugate self-dual automorphic character of $\R_{E/F}\GL_{1}(\AA)$. Then 
  the following are equivalent.
  \begin{enumerate}
  \item The global $A$-parameter $\phi$ of $\pi$ has a $(\chi,1)$-factor.
  \item The complete $L$-function $L(s,\pi\times\chi^{\vee})$ has a pole at $s=1$ (and this is its maximal pole).
  \item The Eisenstein series $E^{Q_{1}}(g,f_{s})$ has a pole at $s=1$ for some choice of $f_{s}\in \cA^{Q_{1}}(s,\chi\boxtimes\pi)$ (and this is its maximal pole).
  \item The lowest occurrence index $\LO_{X}^{\chi}(\pi)$ is
    \begin{equation*}
      \begin{cases}
        \dim X - 1, \quad &\text{if $\bG(X)=\rU(X)$};\\
        \dim X - 2, \quad &\text{if $\bG(X)=\rO(X)$};\\
        \dim X, \quad &\text{if $\bG(X)=\Sp(X)$}.
      \end{cases}
    \end{equation*}
  \end{enumerate}
\end{thm}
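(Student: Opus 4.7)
The plan is to establish the four-way equivalence by chaining together results from earlier sections as $(1)\Leftrightarrow(2)\Leftrightarrow(3)\Leftrightarrow(4)$, each step being essentially a one-line application of a collected result.

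For $(1)\Leftrightarrow(2)$, I would exploit that $\pi$ lies in a \emph{generic} global $A$-packet, so every simple factor of $\phi_\pi$ has $\SL_2(\CC)$-dimension equal to $1$. Proposition~\ref{prop:tau-b-L-pole} applied with $\tau=\chi$ then yields both implications at the level of the partial $L$-function: existence of a $(\chi,1)$-factor (with $b=1$ automatically maximal among \emph{all} factors, since $b_i=1$ for every $i$) forces a pole of $L^S(s,\pi\times\chi^\vee)$ at $s=1$, and conversely any pole there forces a $(\chi,b)$-factor with $b\ge 1$, hence $b=1$. To upgrade from the partial to the complete $L$-function, I would use the factorisation $L(s,\pi\times\chi^\vee)=\prod_i L(s,\tau_i\times\chi^\vee)$ dictated by $\phi_\pi$, combined with the standard Rankin--Selberg analysis for $\GL_{n_i}\times \GL_1$, which ensures the archimedean and ramified factors are holomorphic and non-vanishing at $s=1$ so that complete and partial $L$-functions share poles at $s=1$.

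Next, for $(2)\Leftrightarrow(3)$, I would invoke item~(1) of Theorem~\ref{thm:generic-param-L-pole-Eis-pole}: the three cases $\bG(X)=\rU(X)$ with $\epsilon_\chi\not\equiv\dim X\pmod 2$, $\rO(X)$ with $\dim X$ even, and $\Sp(X)$ are exactly those for which that item applies, and it supplies both the equivalence and the further assertion that the pole at $s=1$ of $E^{Q_1}(g,f_s)$ is its maximal pole. For $(3)\Leftrightarrow(4)$ I would then apply Theorem~\ref{thm:LO-equiv-eis-max-pole} with $s_0=1$; reading off \eqref{eq:LO-in-terms-of-Eis-pole} specialises to $\dim X-1$ in the unitary case, $\dim X-2$ in the orthogonal case and $\dim X$ in the symplectic case, matching item~(4) exactly.

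The argument is essentially a bookkeeping chain of citations; the only care required is to verify that each of the three listed cases falls into the ``pole at $s=1$'' regime of Theorem~\ref{thm:generic-param-L-pole-Eis-pole}(1) rather than the ``non-vanishing at $s=\half$'' regime of item~(2) of that theorem, which is a direct parity check against the hypotheses on $(\bG(X),\chi)$. Since $\bG(X)=\Mp(X)$ is excluded from the present statement, Assumption~\ref{ass:normalised-intertwining-op} does not enter and the conclusion is unconditional. All of the genuinely deep analytic input---holomorphy and non-vanishing of normalised intertwining operators for $\Re s\ge\half$, and the identification of the lowest occurrence index with the maximal positive pole of the Eisenstein series---has already been absorbed into Theorems~\ref{thm:generic-param-L-pole-Eis-pole} and~\ref{thm:LO-equiv-eis-max-pole}, so no new ideas are needed here.
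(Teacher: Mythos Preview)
Your proposal is correct and follows essentially the same route as the paper, which in fact leaves the proof implicit: the theorem is presented as a reformulation ``in terms of $(\chi,b)$-factors'' of the preceding results, namely Theorem~\ref{thm:generic-param-L-pole-Eis-pole} for $(2)\Leftrightarrow(3)$ and Theorem~\ref{thm:LO-equiv-eis-max-pole} for $(3)\Leftrightarrow(4)$. Your treatment of $(1)\Leftrightarrow(2)$ via Proposition~\ref{prop:tau-b-L-pole} plus the passage from partial to complete $L$-function is sound, though one can argue slightly more directly by using the factorisation $L(s,\pi\times\chi^\vee)=\prod_i L(s,\tau_i\times\chi^\vee)$ of complete Rankin--Selberg $L$-functions (valid since $\phi_\pi$ is generic) together with their non-vanishing on $\Re(s)=1$, which rules out cancellation among the factors.
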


\begin{rmk}
  The statements that the poles are maximal are automatic since $\pi$ lies in a  generic global $A$-packet.
  We note that when $\bG(X)=\rU(X)$ with $\epsilon_{\chi}\equiv \dim X \pmod 2$, $\bG(X)=\rO(X)$ with $\dim X$ odd or $\bG(X)=\Mp(X)$, $\phi_{\pi}$ cannot have a $(\chi,1)$-factor as the parity condition  is not satisfied. 
\end{rmk}

\bibliography{../tex/MyBib-20220401}
\bibliographystyle{alpha}

 \end{document}